\definecolor{lightgray}{gray}{0.9}
\newtheorem{theorem}{Theorem}
\newtheorem{problem}{Problem}
\newtheorem{lemma}{Lemma}[section]
\numberwithin{equation}{section}
\def\({\left( }
\def\){\right )}
\begin{document}
\title {\textbf{Non-Polynomial Quintic Spline for Numerical Solution of Fourth--Order Time Fractional Partial Differential Equations}}
\author{Muhammad Abbas\footnote{Corresponding author. Mobile: +92 304 6282830, e-mail addresses: m.abbas@uos.edu.pk.}}
\affil[2]{\small Department of Mathematics, University of Sargodha, Sargodha, Pakistan.}
\maketitle
\vspace{-0.5cm}
\begin{abstract}
This paper presents a novel approach for numerical solution of a class of fourth order time fractional partial differential equations (PDE's). The finite difference formulation has been used for temporal discretization, whereas, the space discretization is achieved by means of non polynomial quintic spline method. The proposed algorithm is proved to be stable and convergent. In order to corroborate this work, some test problems have been considered and the computational outcomes are compared with those found in the exiting literature. It is revealed that the presented scheme is more accurate as compared to current variants on the topic.
 \\\\
\textbf{Keywords:} Non-Polynomial quintic spline, Backward Euler method, Time fractional Partial differential equation, Caputo fractional derivative.
\end{abstract}

\maketitle



\section{Introduction}
In the modern era, fractional order differential equations have gained a significant amount of research work due to their vide range of applications in various branches of science and engineering such as Physics, electrical networks, fluid mechanics, control theory, theory of viscoelasticity, neurology and theory of electromagnetic acoustics \cite{podlubny1998,miller1993}. Wang  \cite{wang2006} introduced the very first approximate solution of nonlinear fractional Korteweg--de Vries (KdV) Burger equation involving space and time fractional derivatives using Adomian Decomposition method. Zurigat \emph{at al.} \cite{zurigat2010} examined the approximate solution of fractional order algebraic differential equations using Homotopy analysis method. Turut and Guzel \cite{turut2013} implemented Adomian decomposition method and multivariate Pade approximation method for solving fractional order nonlinear partial differential equations (PDE's). In \cite{liu2011}, Liu and Hou applied the Generalized differential transform method to solve the coupled Burger equation with space and time fractional derivatives. Khan \emph{et al.} \cite{khan2011} used Adomian decomposition method and Variational iteration method for numerical solution of fourth order time fractional PDE's with variable coefficients. Later on, Abbass \emph{et al.} \cite{abbas2014} employed a finite difference approach based on third degree trigonometric B-spline functions for approximate solution of one-dimensional wave equation.
Javidi and Ahmad \cite{javidi2015} developed a computational technique based on Homotopy perturbation method Laplace transform and Stehfest’s numerical inversion algorithm for solving fourth-order time-fractional PDE's with variable coefficients.The fractional differential transform method and modified fractional differential transform method were proposed by Kanth and Aruna \cite{kanth2015} for series solution to higher dimensional third-order dispersive fractional PDE's. Pandey and Mishra \cite{pandey2017} applied Sumudu transforms and Homotopy analysis approach for solving time-fractional third order dispersive type of PDE's. The fractional Variational iteration method was put into action by Prakash and Kumar in \cite{prakash2017} for series solution to third-order fractional dispersive PDE's in higher dimensional space.\\
The spline approximation techniques have been applied extensively for numerical solution of ODE's and PDE's. The spline functions have a variety of significant gains over finite difference schemes. These functions provide a continuous differentiable estimation to solution over the whole spatial domain with great accuracy. The straightforward employment of spline functions provides a solid ground for applying them in the context of numerical approximations for initial/boundary problems.\\
Khan and Aziz \cite{khan2003}, solved third order boundary-value problems (BVP's) using a numerical method based on quintic spline functions.
In \cite{ramadan2009}, non polynomial quintic spline method was employed for numerical solution of fourth order two-point BVP's. Khan and Sultana \cite{khan2012} proposed non-polynomial quintic spline functions for numerical solution of third order BVP's associated with odd-order obstacle problems. In \cite{srivastava2014}, Srivastava discussed numerical solution of differential equations  using polynomial spline functions of different orders. Siddiqi and Arshed \cite{siddiqi2015} brought the fifth degree basis spline collocation functions into use for approximate solution of fourth order time fractional PDE's. Rashidinia and Mohsenyzadeh \cite{rashidinia2015} used non-polynomial quintic spline technique for one-dimensional heat and wave equations. Recently, in \cite{tariq2017}, fifth degree spline approximation technique has been utilized for approximate solution of fourth-Order time-fractional PDE's. In \cite{hamasalh2015}, the new fractional order spline functions were considered to obtain the approximate solution for fractional Bagely-Torvik Equation. Arshed \cite{arshed2017} employed quintic B-spline collocation scheme for solving fourth order time-fractional super diffusion equation. More recently, parametric quintic spline approach and Grunwald-Letnikov approximation have been proposed in \cite{li2018} for a distributed order fractional sub-diffusion problem.\\
In the field of modern science and engineering the fourth-order initial/boundary value problems are of great importance. For example, airplane wings, bridge slabs, floor systems and window glasses are being  modeled as plates subject to different types end supports which are successfully described in terms of fourth-order PDE's \cite{tariq2017}. In this work, we consider the following class of the fourth-order time-fractional PDE's
\begin{equation}\label{1e}
  \frac{\partial^{\gamma} y}{\partial t^{\gamma}}+\alpha \frac{\partial^{4} y}{\partial x^{4}}=u(x,t), \ \ \ t\in[0,T],\ \ \ \ x\in[0,L],
\end{equation}
with the following initial and boundary conditions
\begin{equation*}
y(x,0)=v_{0}(x)
\end{equation*}
\begin{equation*}
  y(0,t)=y(L,t)=0
\end{equation*}
\begin{equation*}
  y_{xx}(0,t)=y_{xx}(L,t)=0
\end{equation*}
where $\gamma \in(0,1)$, is the order of fractional time derivative, $\alpha$ represents the ratio of flexural-rigidity of beam to its mass per unit length, $y(x,t)$ is the beam transverse displacement, $u(x,t)$ describes the dynamic driving force per unit mass and the function $v_{0}(x)$ is known to be continuous on $[0,L]$. There are many descriptions to the concept of fractional differentiation but Caputo and Riemann-Liouville have been the most common definitions. Here, we shall use the Caputo's approach because it is more appropriate for real world problems and it permits initial and boundary conditions in terms of ordinary derivatives. The Caputo's definition of fractional derivative of order $\gamma$ is given by
\begin{equation*}
 \frac{\partial ^{\gamma}y(x,t)}{\partial t^{\gamma}}=
  \begin{cases}
    \frac{1}{\Gamma(1-\gamma)}\int\limits_{0}^{t}\frac{\partial y(x,s)}{\partial s}\frac{ds}{(t-s)^\gamma} & \mbox{, } 0<\gamma<1 \\
    \frac{\partial y(x,t)}{\partial t},  \ \ \ \ \ \ \ & \mbox{$\gamma=1$}.
  \end{cases}
 \end{equation*}
This paper has been composed with the aim to develop a spline collocation method for approximate solution of fourth order time-fractional PDE's. The backward Euler's scheme has been utilized for temporal discretization, whereas, non polynomial quintic spline function, comprised of a trigonometric part and a polynomial part, has been used to interpolate the unknown function in spatial direction. The presented technique has also been proved to be stable and convergent. \\
This work is arranged as follows: In section 2, a brief explanation of quintic spline scheme has been presented and the consistency relations between the values of spline approximation and its derivatives at the nodal points are derived. Section 3 describes the use of $L1$ approximation in time direction to achieve a backward Euler technique. Non-polynomial quintic spline scheme for the spatial discretization has been discussed in section 4. The computational results and discussions are given in section 5.
\section{Description of Non Polynomial Quintic Spline Function}
Consider $x_{i}=ih$, be the mesh points of uniform partition of $[0,L]$ into sub-intervals $[x_{i},x_{i-1}]$, where $h=\frac{L}{n}$ and $i=0,1,2,\cdots,n$. Let $y(x)$ be a sufficiently smooth function defined on $[0,L]$. We denote the non polynomial quintic spline approximation to $y(x)$ by $S(x)$. Each non polynomial spline segment $R_{i}(x)$ has the following form
 \begin{equation}\label{21e}
 R_{i}(x)=a_{i}\cos (\xi(x-x_{i}))+b_{i}\sin (\xi(x-x_{i}))+c_{i}(x-x_{i})^{3}+d_{i}(x-x_{i})^{2}+e_{i}(x-x_{i})+f_{i},   \\
                                                                                                i=0,1,2,\cdots,n.
 \end{equation}
where $a_{i},b_{i},c_{i},d_{i},e_{i}$ and $f_{i}$ are the constants and the parameter $\xi$, the frequency of the trigonometric functions, will be used to enhance the accuracy of the technique. When $\xi$ approaches to zero, Eq.\eqref{21e} reduces to quintic polynomial spline function in $[a,b]$. The non polynomial quintic spline can be defined as

  \begin{equation}\label{22e}
  S(x)=R_{i}(x),~~~~  \forall~ x\in[x_{i},x_{i+1}],    \ i=0,1,2,\cdots,n.
  \end{equation}
  \begin{equation}
    R_{i}(x)\in C^{4}[0,L]
  \end{equation}
 First of all, we establish the consistency relations for all the coefficients involved in \eqref{21e} in terms of $S_{i}$'s, $M_{i}$'s and $F_{i}$'s, where
  \begin{equation*}
  S_{i}=S(x_{i})=R_{i}(x_{i}),
  \end{equation*}
  \begin{equation*}
  M_{i}=S^{''}(x_{i})=R^{''}_{i}(x_{i})
  \end{equation*}
  \begin{equation*}
  F_{i}=S^{(4)}(x_{i})=R^{(4)}_{i}(x_{i})
  \end{equation*}
 The values of coefficients introduced in \eqref{21e} can be calculated as
\begin{align*}
  a_{i} & =\frac{h^{4}}{\theta^{4}}F_{i}, \\
  b_{i} & =\frac{h^{4}}{\theta^{4}\sin(\theta)}( F_{i+1}-F_{i}\cos(\theta)), \\
  c_{i} & =\frac{1}{6h}(M_{i+1}-M_{i})+\frac{h}{6\theta^{2}}(F_{i+1}-F_{i}), \\
  d_{i} & =\frac{1}{2}M_{i}+\frac{h^{2}}{2\theta^{2}}F_{i}, \\
  e_{i} & =\frac{1}{h}(S_{i+1}-S_{i})+(\frac{h^{3}}{\theta^{4}}-\frac{h^3}{3\theta^{2}})F_{i}-(\frac{h^{3}}{\theta^{4}},
  +\frac{h^3}{6\theta^{2}})F_{i+1}-\frac{h}{6}(M_{i+1}+2M_{i}),\\
  f_{i} & =S_{i}-\frac{h^{4}}{\theta^{4}}F_{i},
\end{align*}
 where $\theta=\xi h$ and $i=0,1,\cdots,n-1.$\\
 Now, using the first and third derivative continuity conditions at the knots, i.e. $ R^{(\tau)}_{i-1}(x_{i})= R^{(\tau)}_{i}(x_{i})$, for $\tau=1,3$, we can derive the following important relations
  \begin{multline}\label{23e}
 M_{i-1}+4M_{i}+M_{i+1}=\frac{6}{h^{2}}(S_{i-1}-2S_{i}+S_{i+1})+\frac{6h^{2}}{\theta^{2}}(\frac{1}{\theta\sin(\theta)}-\frac{1}{\theta^{2}}-\frac{1}{6})(F_{i+1}+F_{i-1})\\
 +\frac{6h^{2}}{\theta^{2}}(\frac{2}{\theta^{2}}-\frac{2\cos(\theta)}{\theta\sin(\theta)}-\frac{4}{6})F_{i}
 \end{multline}
 and
  \begin{multline}\label{24e}
 M_{i-1}-2M_{i}+M_{i+1}=h^{2}(\frac{1}{\theta\sin(\theta)}-\frac{1}{\theta^{2}})(F_{i+1}+F_{i-1})+2h^{2}(\frac{1}{\theta{2}}-\frac{\cos(\theta)}{\theta\sin(\theta)})F_{i}
 \end{multline}
 Solving \eqref{23e} and \eqref{24e}, we get
  \begin{multline}\label{25e}
 M_{i}=\frac{1}{h^{2}}(S_{i-1}-2S_{i}+S_{i+1})+h^{2}(\frac{1}{\theta^{3}\sin(\theta)}-\frac{1}{6\theta\sin(\theta)}-\frac{1}{\theta^{4}})(F_{i+1}+F_{i-1})\\
 +h^{2}(\frac{2}{\theta^{4}}-\frac{2\cos(\theta)}{\theta^{3}\sin(\theta)}+\frac{2\cos(\theta)}{6\theta\sin(\theta)}-\frac{1}{\theta^{2}})(F_{i})
 \end{multline}
 Using \eqref{24e}--\eqref{25e}, we get the following consistency relation involving $F_{i}$ and $S_{i}$\\
  for $i=2,3,\cdots,n-2.$
 \begin{equation}\label{26e}
 S_{i+2}-4S_{i+1}+6S_{i}-4S_{i-1}+S_{i-2}=h^{4}(\alpha_{1} F_{i-2}+\beta_{1} F_{i-1}+\gamma_{1} F_{i}+\beta_{1} F_{i+1}+\alpha_{1} F_{i+2})
 \end{equation}
 where
 \begin{align*}
 \alpha_{1} & =(\frac{1}{\theta^{4}}+\frac{1}{6\theta\sin(\theta)}-\frac{1}{\theta^{3}\sin(\theta)}),\ \  \beta_{1}=(\frac{2+2\cos(\theta)}{\theta^{3}\sin(\theta)}+\frac{2-\cos(\theta)}{3\theta\sin(\theta)}-\frac{4}{\theta^{4}}) \\
 \gamma_{1} & =(\frac{1-4\cos(\theta)}{3\theta\sin(\theta)} - \frac{2+4\cos(\theta)}{\theta^{3}\sin(\theta)} + \frac{6}{\theta^{4}})
 \end{align*}
The relation \eqref{26e} provides $(n-3)$ linear equations with $(n-1)$ unknowns $S_{i}, i=1(1)n-1$. Hence, we require two more equations for direct calculation of $S_{i}$, one at each end of the range of integration, which can be formulated as\\
 setting $i=1,2$  in \eqref{23e} we have
 \begin{equation}\label{27e}
 M_{0}+4M_{1}+M_{2}=\frac{6}{h^{2}}(S_{0}-2S_{1}+S_{2})+\overset{\sim}{\lambda}(F_{0}+F_{2})+\overset{\sim}{\mu}F_{1}
 \end{equation}
 and\\
 \begin{equation}\label{28e}
    M_{1}+4M_{2}+M_{3}=\frac{6}{h^{2}}(S_{1}-2S_{2}+S_{3})+\overset{\sim}{\lambda}(F_{1}+F_{3})+\overset{\sim}{\mu}F_{2}
 \end{equation}
 Similarly, for $i=1,2$ the expression \eqref{24e} returns the following two equations
 \begin{equation}\label{29e}
   M_{0}-2M_{1}+M_{2}=\overset{\approx}{\lambda}(F_{0}+F_{2})+\overset{\approx}{\mu}F_{1}
 \end{equation}
 and
 \begin{equation}\label{30e}
   M_{1}-2M_{2}+M_{3}=\overset{\approx}{\lambda}(F_{1}+F_{3})+\overset{\approx}{\mu}F_{2}.
 \end{equation}
 where
 \begin{align*}
   \overset{\sim}{\lambda} & =\frac{6h^{2}}{\theta^{2}}(\frac{1}{\theta\sin\theta}-\frac{1}{\theta^{2}}-\frac{1}{6}),\ \ \    \overset{\sim}{\mu}=\frac{6h^{2}}{\theta^{2}}(\frac{2}{\theta^{2}}-\frac{2\cos(\theta)}{\theta\sin(\theta)}-\frac{4}{6}), \\
   \overset{\approx}{\lambda} & =h^{2}(\frac{1}{\theta\sin(\theta)}-\frac{1}{\theta^{2}})\ \ \ \text{and}\ \ \   \overset{\approx}{\mu}= 2h^{2}(\frac{1}{\theta{2}}-\frac{\cos(\theta)}{\theta\sin(\theta)}).
 \end{align*}
 From \eqref{27e} and \eqref{29e}, we have \\
\begin{equation}\label{31e}
  M_{1}=\frac{1}{h^{2}}(S_{0}-2S_{1}+S_{2})+\frac{\overset{\sim}{\lambda}-\overset{\approx}{\lambda}}{6}(F_{0}+F_{2})+\frac{\overset{\sim}{\mu}-\overset{\approx}{\mu}}{6}F_{1}
\end{equation}
Similarly, subtracting \eqref{30e} from \eqref{28e}, we get
\begin{equation}\label{32e}
   M_{2}= \frac{1}{h^{2}}(S_{1}-2S_{2}+S_{3})+\frac{\overset{\sim}{\lambda}-\overset{\approx}{\lambda}}{6}(F_{1}+F_{3})+\frac{\overset{\sim}{\mu}-\overset{\approx}{\mu}}{6}F_{2}
\end{equation}
Now, the first end condition is obtained by substituting \eqref{31e}, \eqref{32e} into \eqref{27e} for $i=1$.
\begin{equation}\label{33e}
  -2S_{0}+5S_{1}-4S_{2}+S_{3}=-h^{2}M_{0}+h^{4}(\omega_{0}F_{0}+\omega_{1}F_{1}+\omega_{2}F_{2}+\omega_{3}F_{3})
\end{equation}
Similarly, the second end condition for $i=n$, is given by
\begin{equation}\label{34e}
  S_{n-3}-4S_{n-2}+5S_{n-1}-2S_{n}=-h^{2}M_{n}+h^{4}(\omega_{3}F_{n-3}+\omega_{2}F_{n-2}+\omega_{1}F_{n-1}+\omega_{0}F_{n})
\end{equation}
where
\begin{align*}
  \omega_{0} & =(\frac{2}{\theta^{3}\sin(\theta)}-\frac{2}{\theta^{4}}+\frac{4}{6\theta\sin(\theta)}-\frac{1}{\theta^{2}}),\ \ \ \omega_{1}=\frac{1-8\cos(\theta)}{6\theta\sin(\theta)}-\frac{1+4\cos(\theta)}{\theta^{3}\sin(\theta)}+\frac{5}{\theta^{4}} \\
  \omega_{2} & = (\frac{2+2\cos(\theta)}{\theta^{3}\sin(\theta)}+\frac{2-\cos(\theta)}{3\theta\sin(\theta)}-\frac{4}{\theta^{4}}),\ \ \ \omega_{3}=\frac{1}{6\theta\sin(\theta)}-\frac{1}{\theta^{3}\sin(\theta)}+\frac{1}{\theta^{4}}
\end{align*}\\
\begin{lemma}\label{l1}
 The local truncation error $t_i, i=1(1)n-1$ associated with the Eqs \eqref{26e}, \eqref{33e} and \eqref{34e} is given by
 \begin{equation}\label{E317}
 t_{i}=\begin{cases}
  \big(\frac{11}{12}-\omega_0-\omega_1-\omega_2-\omega_3\big)h^4y_i^{(4)}+ \big(\frac{1}{12}+\omega_0-\omega_2-2\omega_3\big)h^5y_i^{(5)}&\\
  +\big(\frac{11}{90}-\frac{1}{2}\omega_0-\frac{1}{2}\omega_2-2\omega_3\big)h^6y_i^{(6)}
  +\big(\frac{1}{60}+\frac{1}{6}\omega_0-\frac{1}{6}\omega_2-\frac{4}{3}\omega_3\big)h^7y_i^{(7)}&\\
  +\big(\frac{17}{2240}-\frac{1}{24}\omega_0-\frac{1}{24}\omega_2-\frac{2}{3}\omega_3\big)h^8y_i^{(8)}+O(h^9),& i=1   \\

  \big(1-2\alpha_1-2\beta_1-\gamma_1\big)h^4y_i^{(4)}+\big(\frac{1}{6}-4\alpha_1-\beta_1\big)h^6y_i^{(6)}&\\
  +\big(\frac{1}{180}-\frac{4}{3}\alpha_1-\frac{1}{12}\beta_1\big)h^8y_i^{(8)}
  +\big(\frac{17}{30240}-\frac{8}{45}\alpha_1-\frac{1}{360}\beta_1\big)h^{10}y_i^{(10)}+O(h^{11}),& i=2(1)n-2  \\

  \big(\frac{11}{12}-\omega_0-\omega_1-\omega_2-\omega_3\big)h^4y_i^{(4)}+ \big(\frac{1}{12}+\omega_0-\omega_2-2\omega_3\big)h^5y_i^{(5)}&\\
  +\big(\frac{11}{90}-\frac{1}{2}\omega_0-\frac{1}{2}\omega_2-2\omega_3\big)h^6y_i^{(6)}
  +\big(\frac{1}{60}+\frac{1}{6}\omega_0-\frac{1}{6}\omega_2-\frac{4}{3}\omega_3\big)h^7y_i^{(7)}&\\
  +\big(\frac{17}{2240}-\frac{1}{24}\omega_0-\frac{1}{24}\omega_2-\frac{2}{3}\omega_3\big)h^8y_i^{(8)}+O(h^9),& i=n-1   \\
 \end{cases}
 \end{equation}
 \begin{proof}
 We have to find local truncation error $ t_i, i=1,2,...,n-1 $ for the present scheme. First of all, we write Eqs \eqref{26e}, \eqref{33e}, and \eqref{34e} as
\begin{align*}
  t_1&=-2y_0+5y_1-4y_2+y_3+h^2M_0-h^4\big(\omega_0 y_0^{(4)}+\omega_1y_1^{(4)}+\omega_2y_2^{(4)}+\omega_3 y_3^{(4)}\big),\\
  t_i&=y_{i-2}-4y_{i-1}+6y_i-4y_{i+1}+y_{i+2}-h^4\big(\alpha_1y_{i-2}^{(4)}+\beta_1y_{i-1}^{(4)}+\gamma_1 y_i^{(4)}+\beta_1 y_{i+1}^{(4)}+\alpha_1y_{i+2}^{(4)}\big), \\
  t_{n-1}&= y_{n-3}-4y_{n-2}+5y_{n-1}-2y_n+h^2M_n+h^4\big(\omega_3 y_{n-3}^{(4)}+\omega_2 y_{n-2}^{(4)}+\omega_1\big)y_{n-1}^{(4)}+\omega_0 y_{n}^{(4)}\big)
\end{align*}
The expressions for $ t_i, i=1,2,...,n-1 $ can be obtained by expanding the terms $y_0, y_1, y_1^{(4)}, y_2, y_2^{(4)}, y_3, y_3^{(4)}$ etc about the points $x_i,i=1,2,...,n-1$, using Taylor series respectively.
 \end{proof}
 \end{lemma}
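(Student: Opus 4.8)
The plan is to obtain $t_i$ by substituting into the three relations \eqref{26e}, \eqref{33e} and \eqref{34e} the exact nodal values $y_j=y(x_j)$ in place of $S_j$, the exact fourth derivatives $y_j^{(4)}=y^{(4)}(x_j)$ in place of $F_j$, and $y''(x_0)$, $y''(x_n)$ in place of $M_0$, $M_n$; this gives precisely the three error functionals written at the beginning of the proof. Each $y$-value and each $y^{(4)}$-value is then expanded by Taylor's theorem about the relevant node $x_i$, and the coefficients of equal powers of $h$ are gathered. Nothing about the frequency parameter $\theta=\xi h$ is needed in this step: $\alpha_1,\beta_1,\gamma_1$ and $\omega_0,\dots,\omega_3$ are carried along as the fixed constants defined above, and the final formulas remain linear in them.

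I would begin with the interior nodes $i=2,\dots,n-2$. There the left member of \eqref{26e} is the fourth central difference $\delta^4 y_i=y_{i-2}-4y_{i-1}+6y_i-4y_{i+1}+y_{i+2}$ and the right member is $h^4\big(\alpha_1(y_{i-2}^{(4)}+y_{i+2}^{(4)})+\beta_1(y_{i-1}^{(4)}+y_{i+1}^{(4)})+\gamma_1 y_i^{(4)}\big)$; both are invariant under $x_{i+k}\leftrightarrow x_{i-k}$, so all odd-order derivative terms drop out and only even powers of $h$ survive. Substituting the expansions $\delta^4 y_i=h^4y_i^{(4)}+\tfrac16 h^6y_i^{(6)}+\tfrac1{80}h^8y_i^{(8)}+\tfrac{17}{30240}h^{10}y_i^{(10)}+O(h^{12})$ and $\alpha_1(y_{i-2}^{(4)}+y_{i+2}^{(4)})+\beta_1(y_{i-1}^{(4)}+y_{i+1}^{(4)})+\gamma_1 y_i^{(4)}=(2\alpha_1+2\beta_1+\gamma_1)y_i^{(4)}+(4\alpha_1+\beta_1)h^2y_i^{(6)}+(\tfrac43\alpha_1+\tfrac1{12}\beta_1)h^4y_i^{(8)}+(\tfrac8{45}\alpha_1+\tfrac1{360}\beta_1)h^6y_i^{(10)}+O(h^8)$, and subtracting, yields the middle branch of \eqref{E317}.

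For the left end node $i=1$ I would expand $y_0,y_2,y_3$, the fourth derivatives $y_0^{(4)},y_2^{(4)},y_3^{(4)}$, and $M_0=y''(x_1-h)$, all about $x_1$. Now the stencil is one-sided, so odd powers of $h$ also appear, and one must first check that the $O(1),O(h),O(h^2),O(h^3)$ terms cancel: $-2y_0+5y_1-4y_2+y_3$ has no constant or first-order part, and its second- and third-order parts ($-h^2y_1''$ and $h^3y_1'''$) are cancelled exactly by the corresponding parts of $h^2M_0=h^2y''(x_1-h)$ --- this is precisely the consistency of the end condition \eqref{33e}. Collecting the surviving $h^4,\dots,h^8$ terms --- for example the $y^{(4)}$-coefficient is $\tfrac5{12}+\tfrac12-\omega_0-\omega_1-\omega_2-\omega_3=\tfrac{11}{12}-\omega_0-\omega_1-\omega_2-\omega_3$ --- produces the first branch of \eqref{E317}. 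The node $i=n-1$ needs no separate work: under the reflection $x\mapsto L-x$ the relation \eqref{34e} is the mirror image of \eqref{33e} with the weights $\omega_0,\dots,\omega_3$ reversed, so $t_{n-1}$ carries exactly the same coefficients, giving the third branch of \eqref{E317}.

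The whole argument is thus a finite Taylor-expansion computation, and I expect the main obstacle to be bookkeeping rather than anything conceptual: one has to push the expansions to $O(h^9)$ at the ends and to $O(h^{11})$ in the interior and combine the numerous terms linear in $\alpha_1,\beta_1,\gamma_1$ and $\omega_0,\dots,\omega_3$ without error. The only two points needing a little care are recognising the reflection symmetry at the interior nodes (which is what kills the odd powers) and verifying the $O(h^2)$--$O(h^3)$ cancellation at the end nodes described above. In practice I would first tabulate the Taylor coefficients of $y_{1+k}$, $y_{1+k}^{(4)}$ for $k=-1,1,2$ and of $y''(x_1-h)$ up to the needed order, and then sum column by column.
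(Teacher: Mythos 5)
Your proposal is correct and takes essentially the same route as the paper: the paper's proof likewise just writes the three residuals with the exact values $y_j$, $y_j^{(4)}$, $M_0=y''(x_0)$, $M_n=y''(x_n)$ inserted into \eqref{26e}, \eqref{33e}, \eqref{34e} and Taylor-expands about $x_i$, and your execution (the central-difference expansion in the interior, the $h^2$--$h^3$ cancellation at $i=1$, the reflection argument at $i=n-1$) is sound and more explicit than the paper's. The only caveats are misprints in the paper rather than gaps in your argument: your interior $h^{8}$-coefficient $\frac{1}{80}-\frac{4}{3}\alpha_1-\frac{1}{12}\beta_1$ is the correct one (the $\frac{1}{180}$ printed in \eqref{E317} is evidently a typo, since only $\frac{1}{80}$ is consistent with the cancellation claimed in \eqref{E319}), and the reflection at $i=n-1$ actually reverses the signs of the odd-order ($h^{5}$, $h^{7}$) coefficients, a detail both you and the printed statement gloss over but which is harmless because those coefficients are the ones later equated to zero.
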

 Equating the coefficients of $y_i^{(\tau)}$ for $\tau=4,5,6,7$, we get

$\alpha_1=-\frac{1}{720}, \beta_1=\frac{31}{180},\gamma_1=\frac{79}{120}, \omega_0=\frac{7}{90}, \omega_1=\frac{49}{72}, \omega_2=-\frac{7}{45}$ and $\omega_3=\frac{1}{360}$\\

The local truncation error given in Eq \eqref{E317} takes the following form
\begin{equation}\label{E319}
 t_{i}=\begin{cases}
  -\frac{241}{60480} h^8y_i^{(8)}+O(h^9),& i=1   \\
  \frac{1}{3024} h^{10}y_i^{(10)}+O(h^{11}),& i=2(1)n-2  \\
  -\frac{241}{60480} h^8y_i^{(8)}+O(h^9),& i=n-1   \\
 \end{cases}
 \end{equation}
\section{Temporal Discretization}
In order to discretize the time fractional derivative, backward Euler scheme is employed. We consider, $t_{p}=p\Delta t$ for $p=0(1)K$ with $\Delta t=\frac{T}{K}$ as the step size in time direction.
The computation of Caputo time-fractional derivative at $t=t_{p+1}$ can be made as
\begin{equation*}\label{3.1e}
  \int\limits_{0}^{t_{p+1}}\frac{\partial y(x,w)}{\partial w}(t_{p+1}-w)^{-\gamma}dw=\sum_{j=0}^{p}\int\limits_{t_{j}}^{t_{j+1}}\frac{\partial y(x,w)}{\partial w}(t_{p+1}-w)^{-\gamma}dw
\end{equation*}
\begin{align*}
  \int\limits_{0}^{t_{w+1}}\frac{\partial y(x,w)}{\partial w}(t_{p+1}-w)^{-\gamma}dw & =\sum_{j=0}^{p}\int\limits_{t_{j}}^{t_{j+1}}\frac{\partial y(x,w)}{\partial w}(t_{p+1}-w)^{-\gamma}dw \\
  = & \sum_{j=0}^{p}\frac{y(x,t_{j+1})-y(x,t_{j})}{\Delta t}\int\limits_{t_{j}}^{t_{j+1}}(t_{p+1}-w)^{-\gamma}dw+l_{\Delta t}^{p+1} \\
  = & \sum_{j=0}^{p}\frac{y(x,t_{j+1})-y(x,t_{j})}{\Delta t}\int\limits_{t_{p-j}}^{t_{p-j+1}}(\upsilon)^{-\gamma}d\upsilon+l_{\Delta t}^{p+1} \\
  = & \sum_{j=0}^{p}\frac{y(x,t_{p-j+1})-y(x,t_{p-j})}{\Delta t}\int\limits_{t_{j}}^{t_{j+1}}(\upsilon)^{-\gamma}d\upsilon+l_{\Delta t}^{p+1} \\
  = & \frac{1}{1-\gamma}\sum_{j=0}^{p}\frac{y(x,t_{p-j+1})-y(x,t_{p-j})}{\Delta t}((j+1)^{1-\gamma}-j^{1-\gamma})+l_{\Delta t}^{p+1} \\
  = & \frac{1}{1-\gamma}\sum_{j=0}^{p} b_{j}\frac{y(x,t_{p-j+1})-y(x,t_{p-j})}{\Delta t}+l_{\Delta t}^{p+1}
\end{align*}
Where
$b_{j}=(j+1)^{1-\gamma}-j^{1-\gamma}$ and $\upsilon=(t_{p+1}-w)$. The above equation along with the definition of Caputo fractional derivative gives the following relation.
\begin{equation}\label{3.3e}
  \frac{\partial^{\gamma} y(x,t_{p+1})}{\partial t^{\gamma}}=\frac{1}{\Gamma(2-\gamma)}\sum_{j=0}^{p} b_{j}\frac{y(x,t_{p-j+1})-y(x,t_{p-j})}{\Delta t^\gamma}+l_{\Delta t}^{p+1}
\end{equation}
Now, we define a semi--discrete fractional differential operator $G_{t}^{\gamma}$ as
\begin{equation*}
  G_{t}^{\gamma}y(x,t_{p+1})=\frac{1}{\Gamma(2-\gamma)}\sum_{j=0}^{p} b_{j}\frac{y(x,t_{p-j+1})-y(x,t_{p-j})}{\Delta t^\gamma}
\end{equation*}
Then, Eq. \eqref{3.3e} can be written as
\begin{equation}\label{38e}
   \frac{\partial^{\gamma} y(x,t_{p+1})}{\partial t^{\gamma}}=G_{t}^{\gamma}y(x,t_{p+1})+l_{\Delta t}^{p+1}
\end{equation}
Here, $l_{\Delta t}^{p+1}$ denotes the truncation error between $\frac{\partial^{\gamma}}{\partial t^{\gamma}}y(x,t_{p+1})$ and $G_{t}^{\gamma}y(x,t_{p+1})$. Let $G_{t}^{\gamma}y(x,t_{p+1})$ be the approximation of Caputo time-fractional derivative at $t=t_{p+1}$, then Eq. \eqref{1e} can be expressed as
\begin{equation}\label{39e}
  G_{t}^{\gamma}y(x,t_{p+1})+\alpha\frac{\partial^{4}}{\partial x^4
  }y(x,t_{p+1})=u(x,t_{p+1})
\end{equation}
Using \eqref{3.3e}, the above equation can be written as
\begin{multline}\label{3.4e}
   y^{p+1}(x)+\beta \alpha y^{p+1}_{xxxx}=(b_{0}-b_{1})y^{p}(x)+\sum_{j=1}^{p-1}(b_{j}-b_{j+1})y^{p-j}(x)+b_{p}y^{0}(x)+\beta u^{p+1}(x),\\  p=1,2,3,\cdots,j-1.
\end{multline}
where, $\beta=\Gamma(2-\gamma)\Delta t^{\gamma}$ and \ $y^{p+1}(x)=y(x,t^{p+1})$ with the initial and boundary conditions as follow
\begin{equation*}\label{3.5e}
  y^{0}=v_{0}(x),\ \ \ \  x\in [0,L].
\end{equation*}
Moreover, the coefficients $b_{j}$ involved in \eqref{3.3e} have the following properties \\
$\bullet \  b_{j}'s$ are non-negative for $ j=0,1,\cdots,p $ \\
$\bullet \ 1=b_{0}>b_{1}>b_{2}>b_{3}>\cdots>b_{p}, \  b_{p}\rightarrow0$ as $p \rightarrow\infty$\\ $\bullet \  \sum_{j=0}^{p}(b_{j}-b_{j+1})+ b_{p+1}=(b_{0}-b_{1})+ \sum_{j=1}^{p-1}(b_{j}-b_{j+1})+b_{p}=1 $ \\ The truncation error in \eqref{38e} is bounded, i.e.\\
\begin{equation}\label{3.55e}
  |l^{p+1}_{\Delta t}|\leq c \Delta t^{2-\gamma}
\end{equation} \\
where the constant $c$ is dependant on $y$. To apply this scheme, we need the values $y^{0}$ and $y^{1}$.\\
\\For $p=0$, \eqref{3.4e} takes the following form \\ \begin{equation}\label{3.6e}
                                          y^{1}(x)+\beta \alpha y^{1}_{xxxx}=v^{0}(x)+\beta u^{1}(x)
                                        \end{equation}\\
\\For $p=1$, \eqref{3.4e} becomes\\ \begin{equation*}\label{3.7e}
                                          y^{2}(x)+\beta \alpha y^{p+1}_{xxxx}=(b_{0}-b_{1})y^{1}(x)+b_{1}y^{0}(x)+\beta u^{2}(x)
                                        \end{equation*}\\
Now \eqref{3.4e} and \eqref{3.6e} with initial and boundary conditions formulate a complete set of semi-discrete problem for \eqref{1e}.\\
The error term $l^{p+1}$ can also be defined as \cite{lin2007}\\
\begin{equation}\label{3.8e}
 l^{p+1}=\beta \big(\frac{\partial^{\gamma}}{\partial t^{\gamma}}y(x,t_{p+1})-G^{\gamma}_{t}y(x,t_{p+1})\big).
 \end{equation}\\ From Eqs.\eqref{38e} and \eqref{3.55e}, the error term can be expressed as
                                        \begin{equation}\label{3.9e}
                                          |l^{p+1}|=\Gamma(2-\gamma)\Delta t^{\gamma}|l^{p+1}_{\Delta t}| \leq c_{y}\Delta t^2
                                        \end{equation}\\
 Now, we define some functional spaces and their standard norms as
                                        \begin{align*}
                                        H^{2}(\eta) &= \{g\in L^{2}(\eta), g_{x}, g_{xx} \in L^{2}(\eta)\}  \\
                                        H_{0}^{2}(\eta) &= \{g \in H^{2}(\eta), g|_{\partial \eta}=0, g_{x}|_{\partial \eta}=0\} \\
                                        H^{n}(\eta) &= \{g \in L^{2}(\eta), g^{(r)}_{x}, \forall r \leq n\}                                        \end{align*}
where $L^{2}(\eta)$ denotes the space of all measurable functions whose square is Lebesgue integrable in $\eta$ . The inner product and norm in $L^{2}(\eta)$ are given by
\begin{equation*}\label{3.10e}
<f,g>=\int_{\eta}fg dx,\ \ \ \|g\|_{0}=<g,g>^\frac{1}{2}
\end{equation*}\\
The inner product and norm in $S^{2}(\eta)$ are given by
\begin{equation*}\label{3.11e}
<f,g>_{2}=<f,g>+<f_{x},g_{x}>+<f_{xx},g_{xx}>,\ \ \ \|g\|_{2}=<g,g>_{2}^\frac{1}{2}
\end{equation*}\\
Also, the norm $\|.\|$ in $H^{n}(\eta)$ is defined in the following way
\begin{equation*}\label{3.12e}
 \|g\|_{n}= \big(\sum_{r=0}^{n}\|g_{x}^{(r)}\|_{0}^{2}\big)^{\frac{1}{2}}
 \end{equation*}
It is also preferred to define $\|.\|_{2}$ \\
\begin{equation}\label{3.13e}
\|g\|_{2}=\big(\|g\|_{0}^{2}+\beta \alpha \|g_{x}^{(2)}\|_{0}^{2}\big)^\frac{1}{2}
\end{equation}
Now, for the stability and convergence analysis, we are to find $y^{p+1}\in H_{0}^{2}(\eta)$ such that for all $g\in H_{0}^{2}(\eta)$,  Eqs.\eqref{3.4e} and \eqref{3.6e} give the following two relations
\begin{multline}\label{3.14e}
  <y^{p+1},g> +\beta \alpha <y_{xxxx}^{p+1},g>=(1-b_{1})<y^{p},g>+\sum_{j=1}^{p-1}(b_{j}-b_{j+1})<y^{p-j},g>\\+ b_{p}<y^{0},g>+\beta<u^{p+1},g>,
\end{multline}
and
  \begin{equation}\label{3.15e}
                              <y^{1},g>+\beta \alpha<y_{xxxx}^{1},g>=<y^{0},g> +\beta<u^{1},g>
                            \end{equation}
The theorem given below describes the unconditional stability of the semi--discrete problem.\\
\begin{theorem}\label{th1}
The discrete problem is unconditionally stable in such a way that $\forall \Delta t >0$, it holds
\ \ \ \ \ \ \ \ \ \ \ \ \ \ \begin{equation}\label{3.16e}
                              \|y^{p+1}\|_{2} \leq (\|y^{0}\|_{0}+\beta \sum_{j=1}^{p+1}\|u^{j}\|_{0}),\ \ p=0,1,2,\cdots,K-1
                            \end{equation}
where $\|.\|_{2}$ is discussed in Eq.\eqref{3.13e}.\\
\end{theorem}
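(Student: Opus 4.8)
The plan is to run a discrete energy estimate combined with strong induction on $p$. First I would test the weak form \eqref{3.14e} with the test function $g=y^{p+1}$ itself. The left-hand side then becomes $\langle y^{p+1},y^{p+1}\rangle+\beta\alpha\langle y^{p+1}_{xxxx},y^{p+1}\rangle$; integrating the biharmonic term by parts twice and using the boundary conditions $y(0,t)=y(L,t)=0$ and $y_{xx}(0,t)=y_{xx}(L,t)=0$ (which annihilate both boundary contributions), one obtains $\langle y^{p+1}_{xxxx},y^{p+1}\rangle=\|y^{p+1}_{xx}\|_0^2$, so the left-hand side collapses to exactly $\|y^{p+1}\|_0^2+\beta\alpha\|y^{p+1}_{xx}\|_0^2=\|y^{p+1}\|_2^2$ in the norm \eqref{3.13e}. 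The identical substitution in \eqref{3.15e} provides the base case $p=0$.

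On the right-hand side I would apply the Cauchy--Schwarz inequality to each inner product and then the trivial bound $\|g\|_0\le\|g\|_2$; since the memory weights $1-b_1$, $b_j-b_{j+1}$ and $b_p$ are all nonnegative (the first two listed properties of the $b_j$'s), dividing through by $\|y^{p+1}\|_2$ — the case $\|y^{p+1}\|_2=0$ being immediate — yields
\[
\|y^{p+1}\|_2\le(1-b_1)\|y^{p}\|_0+\sum_{j=1}^{p-1}(b_j-b_{j+1})\|y^{p-j}\|_0+b_p\|y^{0}\|_0+\beta\|u^{p+1}\|_0,
\]
and likewise $\|y^{1}\|_2\le\|y^{0}\|_0+\beta\|u^{1}\|_0$ from \eqref{3.15e}.

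Finally I would close the induction. Assuming $\|y^{m}\|_2\le\|y^{0}\|_0+\beta\sum_{i=1}^{m}\|u^{i}\|_0$ for every $m\le p$, I substitute this bound (together with $\|y^m\|_0\le\|y^m\|_2$) into the displayed inequality and collect coefficients. The total weight on $\|y^{0}\|_0$ telescopes to $(b_0-b_1)+\sum_{j=1}^{p-1}(b_j-b_{j+1})+b_p=b_0=1$, which is precisely the third listed property of the $b_j$'s, and the weight multiplying each $\beta\|u^{i}\|_0$ with $1\le i\le p$ telescopes to $1-b_{p-i+1}\le1$, while $\beta\|u^{p+1}\|_0$ carries weight $1$. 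Hence the right side is bounded by $\|y^{0}\|_0+\beta\sum_{j=1}^{p+1}\|u^{j}\|_0$, which is the asserted estimate, and since no restriction on $\Delta t$ is ever used the bound is unconditional. The one step that needs care is this last bookkeeping: one must line up the telescoping of the $b_j-b_{j+1}$ sums so that the $\|y^{0}\|_0$ coefficient comes out exactly $1$ and no $\|u^{i}\|_0$ coefficient exceeds $1$; everything else is routine.
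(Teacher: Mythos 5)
Your proposal is correct and follows essentially the same route as the paper: testing \eqref{3.15e} and \eqref{3.14e} with $g=y^{1}$ and $g=y^{p+1}$, integrating the biharmonic term by parts under the simply supported boundary conditions, applying Cauchy--Schwarz with $\|g\|_{0}\le\|g\|_{2}$, and closing an induction via the nonnegativity and telescoping properties of the $b_{j}$'s. Your final bookkeeping (tracking the coefficient $1-b_{p-i+1}\le 1$ on each $\beta\|u^{i}\|_{0}$) is in fact a slightly more careful accounting than the paper's, which simply factors out the common bound $\|y^{0}\|_{0}+\beta\sum_{j}\|u^{j}\|_{0}$ and uses that the weights sum to one.
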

\begin{proof}
  In order to prove this result, mathematical induction is used. For $p=0$ and $g=y^{1}, $ Eq. \eqref{3.15e} takes the following form
 \begin{equation*}\label{3.17e}
 <y^{1},y^{1}>+\beta \alpha <y^{1}_{xxxx},y^{1}> = <y^{0},y^{1}> +\beta <u^{1},y^{1}>
 \end{equation*}
Integrating by parts, the above result can be written as
\ \ \ \ \ \ \ \ \ \ \ \ \ \ \ \begin{equation}\label{3.18e}
<y^{1},y^{1}>+\beta \alpha <y^{1}_{xx},y^{1}_{xx}> = <y^{0},y^{1}> +\beta <u^{1},y^{1}>
                              \end{equation}
Due to the boundary conditions on $g$, all the boundary related contributions are disappeared.
From Schwarz inequality and the inequality $\|g\|_{0}$ $\leq$ $\|g\|_{2}$, Eq. \eqref{3.18e} becomes
                           \begin{align*}
                             \|y^{1}\|^{2}_{2} & \leq \|y^{0}\|_{0} \  \|y^{1}\|_{0} +\beta\|u^{1}\|_{0} \ \|y^{1}\|_{0} \\
                               & \leq \|y^{0}\|_{0} \  \|y^{1}\|_{2} +\beta\|u^{1}\|_{0} \ \|y^{1}\|_{2} \\
                             \|y^{1}\|_{2} & \leq (\|y^{0}\|_{0}+\beta\|u^{1}\|_{0})
                           \end{align*}
Suppose that the result is true for $g=y^{j}$ i.e
                            \begin{equation}\label{3.19e}
                              \|y^{j}\|_{2}\leq \big(\|y^{0}\|_{0}+\beta \sum_{i=1}^{j} \|u^{i}\|_{0}\big),\ \ \ \ j=2,3,\cdots,p.
                            \end{equation}
Let $g=y^{p+1}$ in Eq.\eqref{3.14e}
\begin{multline}
<y^{p+1},y^{p+1}>+\beta\alpha<y_{xxxx}^{p+1},y^{p+1}>=(1-b_{1})<y^{p},y^{p+1}>+\sum_{j=1}^{p-1}(b_{j}-b_{j+1})<y^{p-j},y^{p+1}>\\
+b_{p}<y^{0},y^{p+1}>+\beta <u^{p+1},y^{p+1}>
\end{multline}\\
Integrating by parts, we get
\begin{multline}
<y^{p+1},y^{p+1}>+\beta \alpha<y_{xx}^{p+1},y_{xx}^{p+1}>=(1-b_{1})<y^{p},y^{p+1}>+\sum_{j=1}^{p-1}(b_{j}-b_{j+1})<y^{p-j},y^{p+1}>\\
+b_{p}<y^{0},y^{p+1}>+\beta <u^{p+1},y^{p+1}>
\end{multline}\label{51e}
Again due to the boundary conditions on $g$ all the boundary related contributions are disappeared.
From Schwarz inequality and the inequality $\|g\|_{0}$ $\leq$ $\|g\|_{2}$, the above expression changes to
\begin{multline*}
  \|y^{p+1}\|_{2}^{2}\leq(1-b_{1})\|y^{p}\|_{0}\|y^{p+1}\|_{0}+\sum_{j=1}^{p-1}(b_{j}-b_{j+1}\|y^{p-j}\|_{0}\|y^{p+1}\|_{0}) \\
  +b_{p}\|y^{0}\|_{0}\|y^{p+1}\|_{0}+\beta \|u^{p+1}\|_{0}\|y^{p+1}\|_{0},
\end{multline*}
or
\begin{multline*}
   \|y^{p+1}\|_{2}^{2}\leq(1-b_{1})\|y^{p}\|_{0}\|y^{p+1}\|_{2}+\sum_{j=1}^{p-1}(b_{j}-b_{j+1}\|y^{p-j}\|_{0}\|y^{p+1}\|_{2}) \\
  +b_{p}\|y^{0}\|_{0}\|y^{p+1}\|_{2}+\beta \|u^{p+1}\|_{0}\|y^{p+1}\|_{2},
\end{multline*}
or
\begin{equation*}
   \|y^{p+1}\|_{2}\leq (1-b_{1})\|y^{p}\|_{0}+\sum_{j=1}^{p-1}(b_{j}-b_{j+1}\|y^{p-j}\|_{0} +b_{p}\|y^{0}\|_{0}+\beta \|u^{p+1}\|_{0}
\end{equation*}\\
Using \eqref{3.19e}, the above relation takes the following form\\
\begin{equation*}
  \|y^{p+1}\|_{2}\leq \bigg(|y^{0}\|_{0}+\beta \sum_{j=1}^{p-1}\|u^{j}\|_{0}\bigg)\bigg((1-b_{1})+\sum_{j=1}^{p-1}(b_{j}-b_{j+1})+b_{p}\bigg)+\beta \|u^{p+1}\|_{0}
\end{equation*}\\
Using the properties of $b_{j}$, we can write\\
\begin{equation*}
  \|y^{p+1}\|_{2} \leq \bigg(\|y^{0}\|_{0}+\beta \sum_{j=1}^{p+1}\|u^{j}\|_{0}\bigg)
\end{equation*}\\
\end{proof}
\begin{lemma}\label{L1}
 Let $\{y^{p}\}_{p=0}^{K}$ be the time discrete solution to Eqs. \eqref{3.14e}--\eqref{3.15e} and $y$ be the exact solution of \eqref{1e}, then
\begin{equation}\label{3.20e}
  \|y(t_{p})-y^{p}\|_{2}\leq c_{y}b_{p-1}^{-1}\Delta t^{2},\ \ \ \ p=1,2,\cdots,K.
  \end{equation}
\end{lemma}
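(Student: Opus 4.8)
The plan is to reproduce the energy argument of Theorem~\ref{th1} for the error $e^{p}:=y(t_{p})-y^{p}$, carrying the temporal consistency error of \eqref{3.8e}--\eqref{3.9e} as an additional forcing term. First I would note that, since $y$ solves \eqref{1e}, the identity \eqref{38e} shows that the exact values $y(x,t_{p+1})$ satisfy the same weak relations \eqref{3.14e} and \eqref{3.15e} as the discrete solution, the only difference being an extra term $-\langle l^{p+1},g\rangle$ on the right-hand side, where $l^{p+1}$ is the temporal consistency error defined in \eqref{3.8e}. Because the initial datum is reproduced exactly, $e^{0}=0$, and subtracting the equations for $y^{p+1}$ gives, for all $g\in H_{0}^{2}(\eta)$,
\[
\langle e^{p+1},g\rangle+\beta\alpha\langle e^{p+1}_{xxxx},g\rangle=(1-b_{1})\langle e^{p},g\rangle+\sum_{j=1}^{p-1}(b_{j}-b_{j+1})\langle e^{p-j},g\rangle+b_{p}\langle e^{0},g\rangle-\langle l^{p+1},g\rangle,
\]
together with $\langle e^{1},g\rangle+\beta\alpha\langle e^{1}_{xxxx},g\rangle=-\langle l^{1},g\rangle$.

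Next I would take $g=e^{p+1}$ and integrate the biharmonic term by parts twice; the boundary contributions vanish because $e^{p+1}\in H_{0}^{2}(\eta)$, exactly as in Theorem~\ref{th1}, so the left-hand side equals $\|e^{p+1}\|_{2}^{2}$ in the sense of \eqref{3.13e}. Applying the Schwarz inequality to each term on the right, using $\|v\|_{0}\le\|v\|_{2}$, and cancelling one factor of $\|e^{p+1}\|_{2}$ yields
\[
\|e^{p+1}\|_{2}\le(1-b_{1})\|e^{p}\|_{2}+\sum_{j=1}^{p-1}(b_{j}-b_{j+1})\|e^{p-j}\|_{2}+b_{p}\|e^{0}\|_{2}+\|l^{p+1}\|_{0},
\]
and $\|e^{1}\|_{2}\le\|l^{1}\|_{0}$; by \eqref{3.9e} every consistency error satisfies $\|l^{k}\|_{0}\le c_{y}\Delta t^{2}$.

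The bound \eqref{3.20e} then follows by induction on $p$. For $p=1$ it is immediate, since $b_{0}=1$ and $\|e^{1}\|_{2}\le c_{y}\Delta t^{2}=c_{y}b_{0}^{-1}\Delta t^{2}$. Assuming $\|e^{k}\|_{2}\le c_{y}b_{k-1}^{-1}\Delta t^{2}$ for all $k\le p$, I would use $e^{0}=0$, the induction hypothesis, and the strict monotonicity $1=b_{0}>b_{1}>b_{2}>\cdots$ (so that $b_{k}^{-1}\le b_{p}^{-1}$ for every index $k\le p-1$ that occurs) to obtain
\[
\|e^{p+1}\|_{2}\le c_{y}b_{p}^{-1}\Delta t^{2}\Big[(1-b_{1})+\sum_{j=1}^{p-1}(b_{j}-b_{j+1})\Big]+c_{y}\Delta t^{2}=c_{y}b_{p}^{-1}(1-b_{p})\Delta t^{2}+c_{y}\Delta t^{2},
\]
where the telescoping sum equals $1-b_{p}$; since $b_{p}^{-1}(1-b_{p})+1=b_{p}^{-1}$, the right-hand side is precisely $c_{y}b_{p}^{-1}\Delta t^{2}$, which closes the induction.

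The integration by parts, the Schwarz estimate, and $\|\cdot\|_{0}\le\|\cdot\|_{2}$ are the same as in Theorem~\ref{th1} and pose no difficulty. The point requiring care is keeping the induction self-reproducing across the $O(\Delta t^{-1})$ time levels: majorizing the factors $b_{p-j-1}^{-1}$ by $b_{p-1}^{-1}$ would let the constant grow, whereas majorizing each of them by $b_{p}^{-1}$ --- and \emph{not} touching the $\|l^{p+1}\|_{0}$ term --- makes the telescoping identity $(1-b_{1})+\sum_{j=1}^{p-1}(b_{j}-b_{j+1})=1-b_{p}$ combine with the algebraic identity $b_{p}^{-1}(1-b_{p})+1=b_{p}^{-1}$ so that the constant $c_{y}$ is exactly preserved. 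The only other thing to check is that $y(x,t_{p+1})$ really does satisfy the weak form with forcing $-l^{p+1}$, which is a direct consequence of \eqref{38e} and the definition \eqref{3.8e}.
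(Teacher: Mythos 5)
Your proposal is correct and follows essentially the same route as the paper: form the error equations from the weak formulations \eqref{3.14e}--\eqref{3.15e}, test with $g=e^{p+1}$, integrate by parts, apply Schwarz with $\|\cdot\|_{0}\le\|\cdot\|_{2}$, and close the induction using the monotonicity of the $b_{j}$ together with the telescoping identity $(1-b_{1})+\sum_{j=1}^{p-1}(b_{j}-b_{j+1})=1-b_{p}$ and the bound \eqref{3.9e}. In fact you spell out the induction step (majorizing each $b_{k-1}^{-1}$ by $b_{p}^{-1}$ and using $b_{p}^{-1}(1-b_{p})+1=b_{p}^{-1}$) more explicitly than the paper, which compresses it into a single sentence; the sign on $\langle l^{p+1},g\rangle$ is immaterial since only $\|l^{p+1}\|_{0}$ enters the estimate.
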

\begin{proof}
Consider $e^{p}=y(x,t_{p})-y^{p}(x), $ for $p=1$, the error equation takes the following form by combining Eqs.\eqref{1e},\eqref{3.15e} and \eqref{3.13e} \
\begin{equation*}
  <e^{1},g>+\beta \alpha <e_{xx}^{1},g_{xx}>=<e^{0},g>+<l^{1},g>, \ \ \ \ \ \ \forall g\in S_{0}^{2}(\eta).
\end{equation*}\\
Let $g=e^{1}$ and $e^{0}=0$ gives the following relation\\
\begin{equation}\label{3.211e}
  \|e^{1}\|_{2} \leq \|l^{1}\|_{0}
\end{equation}\\
Eq. \eqref{3.9e} along with \eqref{3.211e}, gives
\begin{equation}\label{3.21e}
  \|y(t_{1})-y^{1}\|_{2} \leq c_{y} b_{0}^{-1}\Delta t^{2}.
\end{equation}\\
For $p=1$, Eq. \eqref{3.20e} is satisfied.\\
Next, suppose that \eqref{3.20e} is true for $p=1,2,3,\cdots,r.$ \ i.e.
 \begin{equation}\label{3.22e}
   \|y(t_{p})-y^{p}\|_{2}\leq c_{y}b_{p-1}^{-1}\Delta t^{2}
 \end{equation}\\
 Using \eqref{1e}, \eqref{3.13e} , \eqref{3.14e} and for $p=r+1$, the error equation is obtained as,\\
 \begin{multline}\label{3.212e}
   <e^{p+1},g>+\beta \alpha <e_{xx}^{p+1},g_{xx}>=(1-b_{1})<e^{p},g>+\sum_{j=1}^{p-1}(b_{j}-b_{j+1})<e^{p-j},g> \\
   +b_{p}<e^{0},g>+<l^{p+1},g>.
\end{multline}
Now, using the induction assumption and taking $g=e^{p+1}$ along with the relation  $\frac{b_{j}^{-1}}{b_{j+1}}<1$ for all positive integer $j$, Eq. \eqref{3.212e} can be written as\\
\begin{equation*}
  \|e^{p+1}\|_{2}\leq c_{y}b_{p}^{-1}\Delta t^{2}.
\end{equation*}\\
Hence, proved.
\end{proof}
Also, from the definition of $ b_{p}$, the following useful equation can be formulated  \\
\begin{align*}
  \underset{p\rightarrow\infty}{lim}\frac{b_{p-1}^{-1}}{p^{\gamma}} \leq & =\underset{p\rightarrow\infty}{lim}\frac{p^{-\gamma}}{p^{1-\gamma}-(p-1)^{1-\gamma}} \\
   & =\underset{p\rightarrow\infty}{lim}\frac{p^{-1}}{1-(1-\frac{1}{p})^{1-\gamma}} \\
   & =\frac{1}{1-\gamma}
\end{align*}\\
The function $\psi(z)$ is defined as $\psi(z)=\frac{z^{-\gamma}}{z^{1-\gamma}-(z-1)^{1-\gamma}}$,  as $\psi(z)\geq 0$ \ $\forall$  $z>1$, the function $\psi(z)$ is increasing on z. This indicates that as $1<p\rightarrow\infty$, $\frac{b_{p-1}^{-1}}{p^\gamma}$ increasingly approaches to $\frac{1}{1-\gamma}$. \\ Since, for $p=1$,  $p^{-\gamma}b_{p-1}^{-1}=1$. Therefore, it can be written in the following form\\
\begin{equation*}
  p^{-\gamma}b_{p-1}^{-1}\leq \frac{1}{1-\gamma} , \ \ \ \ \ \  p=1,2,\cdots,K.
\end{equation*}
Therefore, $\forall$ $p$ such that $p\Delta t \leq T$,\\
\begin{align*}
  \|y(t_{p})-y^{p}\|_{2} & \leq c_{y}b_{p-1}^{-1}\Delta t^{2} \\
   & =c_{y} p^{-\gamma}b_{p-1}^{-1}p^{-\gamma}\Delta t^{2-\gamma+\gamma} \\
   & \leq c_{y}\frac{1}{1-\gamma}(p\Delta t)^{\gamma}(\Delta t)^{2-\gamma} \\
   & \leq c_{y.\gamma}T^{\gamma}\Delta t^{2-\gamma}
\end{align*}
The above discussion can be summed up in following theorem.
\begin{theorem}\label{th2}
Let $y$ be the analytical exact solution to \eqref{1e} and $\{y^{p}\}_{p=0}^{K}$ be the time discrete solution to Eq.\eqref{3.14e} and Eq.\eqref{3.15e} subject to the initial condition $y^{0}=v_{0}(x)$, $x\in[0,L]$, then the following holds \\
\begin{equation}\label{3.23e}
  \|y(t_{p})-y^{p}\|_{2}\leq c_{y.\gamma}T^{\gamma}\Delta t^{2-\gamma} ,\ \ \ p=1,2,3,\cdots,K.
\end{equation}
\end{theorem}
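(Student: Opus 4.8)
The statement is essentially a corollary of Lemma~\ref{L1}, so the plan is to upgrade the $b_{p-1}^{-1}$-dependent bound \eqref{3.20e} into an estimate that is uniform in $p$ as long as $p\Delta t\le T$. First I would recall from Lemma~\ref{L1} that $\|y(t_p)-y^p\|_2\le c_y\, b_{p-1}^{-1}\,\Delta t^2$, where $b_{j}=(j+1)^{1-\gamma}-j^{1-\gamma}$, and rewrite the right-hand side by inserting the factor $p^{\gamma}p^{-\gamma}$:
\begin{equation*}
\|y(t_p)-y^p\|_2\le c_y\big(p^{-\gamma}b_{p-1}^{-1}\big)\,p^{\gamma}\Delta t^{2}
= c_y\big(p^{-\gamma}b_{p-1}^{-1}\big)\,(p\Delta t)^{\gamma}\,\Delta t^{2-\gamma}.
\end{equation*}
Thus everything reduces to bounding the quantity $p^{-\gamma}b_{p-1}^{-1}$ independently of $p$.

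The key step is to show that $p^{-\gamma}b_{p-1}^{-1}\le \dfrac{1}{1-\gamma}$ for every integer $p\ge 1$. The cleanest route I would take is an application of the mean value theorem to $f(t)=t^{1-\gamma}$ on $[p-1,p]$: there is $\zeta\in(p-1,p)$ with $b_{p-1}=f(p)-f(p-1)=f'(\zeta)=(1-\gamma)\zeta^{-\gamma}$, and since $t\mapsto (1-\gamma)t^{-\gamma}$ is decreasing (as $0<\gamma<1$), we get $b_{p-1}\ge (1-\gamma)p^{-\gamma}$, i.e. $b_{p-1}^{-1}\le \frac{p^{\gamma}}{1-\gamma}$. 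This is the content that the discussion preceding the theorem phrases via the auxiliary function $\psi(z)=\frac{z^{-\gamma}}{z^{1-\gamma}-(z-1)^{1-\gamma}}$: one checks $\psi\ge0$, that $\psi$ is nondecreasing on $(1,\infty)$ (equivalently that its reciprocal $z-z^{\gamma}(z-1)^{1-\gamma}$ is nonincreasing, again a convexity fact about $t^{1-\gamma}$), and that $\psi(z)\to\frac{1}{1-\gamma}$ as $z\to\infty$ from the expansion $1-(1-1/z)^{1-\gamma}\sim (1-\gamma)/z$; since $\psi(1)=1\le\frac{1}{1-\gamma}$ and $\psi$ increases to its limit, the bound follows. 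I would present the mean-value-theorem version because it avoids differentiating $\psi$ altogether. This is the only step with any genuine content and hence the main obstacle, though a mild one.

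Finally I would substitute $p^{-\gamma}b_{p-1}^{-1}\le\frac{1}{1-\gamma}$ and use the admissibility constraint $p\Delta t\le T$ to conclude
\begin{equation*}
\|y(t_p)-y^p\|_2\le \frac{c_y}{1-\gamma}\,T^{\gamma}\,\Delta t^{2-\gamma},\qquad p=1,2,\dots,K,
\end{equation*}
and absorb $c_y/(1-\gamma)$ into the constant $c_{y.\gamma}$, which is exactly \eqref{3.23e}. No further estimates are needed; the remaining work has already been done in Lemma~\ref{L1} and Theorem~\ref{th1}.
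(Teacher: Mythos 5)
Your proposal is correct and follows the same overall route as the paper: both start from Lemma~\ref{L1}, rewrite $b_{p-1}^{-1}\Delta t^{2}=\big(p^{-\gamma}b_{p-1}^{-1}\big)(p\Delta t)^{\gamma}\Delta t^{2-\gamma}$, bound $p^{-\gamma}b_{p-1}^{-1}$ by $\frac{1}{1-\gamma}$, and finish using $p\Delta t\le T$. The only divergence is how that key bound is justified. The paper computes $\lim_{p\to\infty}p^{-\gamma}b_{p-1}^{-1}=\frac{1}{1-\gamma}$ and then asserts that $\psi(z)=\frac{z^{-\gamma}}{z^{1-\gamma}-(z-1)^{1-\gamma}}$ is increasing on $(1,\infty)$, offering only $\psi(z)\ge 0$ as justification (which by itself does not imply monotonicity), together with the separate observation that $p^{-\gamma}b_{p-1}^{-1}=1$ at $p=1$. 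Your mean-value-theorem argument, $b_{p-1}=(1-\gamma)\zeta^{-\gamma}\ge(1-\gamma)p^{-\gamma}$ for some $\zeta\in(p-1,p)$, yields the same inequality for every $p\ge 1$ in one line, requires no limit computation, no monotonicity claim, and no separate check of $p=1$; it is in fact a more rigorous justification of the one step with genuine content. What the paper's version buys in exchange is the additional observation that the constant $\frac{1}{1-\gamma}$ is asymptotically sharp, since $p^{-\gamma}b_{p-1}^{-1}$ tends to it as $p\to\infty$, but this is not needed for the estimate \eqref{3.23e} itself.
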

 \section{Discretization in Space}
Let $(x_{i},t_{p})$ be the grid points which uniformly discretize the region $[0,L]\times[0,T]$ with $x_{i}=ih$, $t_{p}= p\Delta t$, $T=K\Delta t$, where, $i=0(1)n$ and $p=0(1)K$. The parameters $h$, $\Delta t$ are the grid sizes in the space and time directions respectively. The space discretization of Eq.\eqref{3.4e} using non polynomial quintic spline is formulated as\\
\begin{equation}\label{4.1e}
  S_{i}^{p+1}+\beta\alpha F^{p+1}=(1-b_{1})S_{i}^{p}+\sum_{j=1}^{p-1}(b_{j}-b_{j+1})S_{i}^{p-j}+b_{p}v_{i}+\beta u_{i}^{p+1}.
\end{equation}
The operator $\Phi$ is defined as
\begin{equation}\label{4.2e}
  \Phi S_{j}=\alpha_{1}S_{j-2}+\beta_{1}S_{j-1}+\gamma_{1}S_{j}+\beta_{1}S_{j+1}+\alpha_{1}S_{j+2}.
\end{equation}
Now, Eq.\eqref{26e} takes the following form\\
\begin{equation}\label{4.3e}
  \Phi F_{i}=\frac{1}{h^{4}}(S_{i-2}-4S_{i-1}+6S_{i}-4S_{i+1}+S_{i+2}).
\end{equation}
Applying the operator $\Phi$ on Eq.\eqref{4.1e}, we get the following result
\begin{multline}\label{4.4e}
  \alpha_{1}S_{i-2}^{p+1}+\beta_{1}S_{i-1}^{p+1}+\gamma_{1}S_{i}^{p+1}+\beta_{1}S_{i+1}^{p+1}+\alpha_{1}S_{i+2}^{p+1}+\frac{\beta\alpha}{h^{4}}(S_{i-2}^{p+1}-4S_{i-1}^{p+1}+6S_{i}^{p+1}-4S_{i+1}^{p+1}+S_{i+2}^{p+1}) \\ =(1-b_{1})(\alpha_{1}S_{i-2}^{p}+\beta_{1}S_{i-1}^{p}+\gamma_{1}S_{i}^{p}+\beta_{1}S_{i+1}^{p}+\alpha_{1}S_{i+2}^{p})+\sum_{j=1}^{p-1}(b_{j}-b_{j+1})(\alpha_{1}S_{i-2}^{p-j}+\beta_{1}S_{i-1}^{p-j}\\+\gamma_{1}S_{i}^{p-j}+\beta_{1}S_{i}^{p-j}
  +\alpha_{1}S_{i+2}^{p-j})+b_{p}(\alpha_{1}v_{i-2}+\beta_{1}v_{i-1}+\gamma_{1}v_{i}+\beta_{1}v_{i+1}+\alpha_{1}v_{i+2})\\+\beta(\alpha_{1}u_{i-2}^{p+1}+\beta_{1}u_{i-1}^{p+1}+\gamma_{1}u_{i}^{p+1}+\beta_{1}u_{i+1}^{p+1}+\alpha_{1}u_{i+2}^{p+1})
 ~~~, p=1,2,3,\cdots,K-1.
\end{multline}
After simplifying the system \eqref{4.4e} takes the following form
\begin{multline}\label{4.5e}
(\alpha_{1}+\frac{\beta\alpha}{h^{4}})S_{i-2}^{p+1}+(\beta_{1}-4\frac{\beta\alpha}{h^{4}})S_{i-1}^{p+1}+(\gamma_{1}+6\frac{\beta\alpha}{h^{4}})S_{i}^{p+1} +(\beta_{1}-4\frac{\beta\alpha}{h^{4}})S_{i+1}^{p+1}+(\alpha_{1}+\frac{\beta\alpha}{h^{4}})S_{i+2}^{p+1}\\
  =Q_{i},\ \ \ i=2,3,\cdots,n-2,\ \ \ \ p=1,2,\cdots,K-1.
\end{multline}
where
\begin{multline}\label{59e}
  Q_{i}= (1-b_{1})(\alpha_{1}S_{i-2}^{p}+\beta_{1}S_{i-1}^{p}+\gamma_{1}S_{i}^{p}+\beta_{1}S_{i+1}^{p}+\alpha_{1}S_{i+2}^{p})
  +\sum_{j=1}^{p-1}(b_{j}-b_{j+1})(\alpha_{1}S_{i-2}^{p-j}+\beta_{1}S_{i-1}^{p-j}\\
  +\gamma_{1}S_{i}^{p-j}+\beta_{1}S_{i}^{p-j}+\alpha_{1}S_{i+2}^{p-j})+b_{p}(\alpha_{1}v_{i-2}+\beta_{1}v_{i-1}+\gamma_{1}v_{i}+\beta_{1}v_{i+1}+\alpha_{1}v_{i+2})
  \\+\beta(\alpha_{1}u_{i-2}^{p+1}+\beta_{1}u_{i-1}^{p+1}+\gamma_{1}u_{i}^{p+1}+\beta_{1}u_{i+1}^{p+1}+\alpha_{1}u_{i+2}^{p+1})
\end{multline}\\
System \eqref{59e} provides $(n-3)$ equations involving $S_{i}^{p+1}, i=1,2,\cdots,n-1$. Therefore, we further need two equations for complete solution of $S_{i}^{p+1}$. The required two end conditions can be derived using simply supported boundary conditions as
\begin{multline}
 (\omega_{0}-2\frac{\beta\alpha}{h^{4}})S_{0}^{p+1}+(\omega_{1}+5\frac{\beta\alpha}{h^{4}})S_{1}^{p+1}+(\omega_{2}-4\frac{\beta\alpha}{h^{4}})S_{2}^{p+1}
  +(\omega_{3}+\frac{\beta\alpha}{h^{4}})S_{3}^{p+1}=(1-b_{1})(\omega_{0}S_{0}^{p}+\omega_{1}S_{1}^{p}+\omega_{2}S_{2}^{p}\\+\omega_{3}S_{3}^{p})
  +\sum_{j=1}^{p-1}(b_{j}-b_{j+1})(\omega_{0}S_{0}^{p-j}+\omega_{1}S_{1}^{p-j}+\omega_{2}S_{2}^{p-j}+\omega_{3}S_{3}^{p-j})
  +b_{p}(\omega_{0}v_{0}+\omega_{1}v_{1}+\omega_{2}v_{2}\\+\omega_{3}v_{3})
  +\beta(\omega_{0}u_{0}^{p+1}+\omega_{1}u_{1}^{p+1}
+\omega_{2}u_{2}^{p+1}+\omega_{3}u_{3}^{p+1})
\end{multline}
Similarly
\begin{multline}
  (\omega_{3}+\frac{\beta\alpha}{h^{4}})S_{n-3}^{p+1}+(\omega_{2}-4\frac{\beta\alpha}{h^{4}})S_{n-2}^{p+1}+(\omega_{1}+5\frac{\beta\alpha}{h^{4}})S_{n-1}^{p+1}
  +(\omega_{0}-2\frac{\beta\alpha}{h^{4}})S_{n}^{p+1}=(1-b_{1})(\omega_{3}S_{n-3}^{p}+\omega_{2}S_{n-2}^{p}\\+\omega_{1}S_{n-1}^{p}+\omega_{0}S_{n}^{p})
  +\sum_{j=1}^{p-1}(b_{j}-b_{j+1})(\omega_{3}S_{n-3}^{p-j}+\omega_{2}S_{n-2}^{p-j}+\omega_{1}S_{n-1}^{p-j}+\omega_{0}S_{n}^{p-j})
  +b_{p}(\omega_{3}v_{n-3}+\omega_{2}v_{n-2}\\+\omega_{1}v_{n-1}+\omega_{0}v_{n})
  +\beta(\omega_{3}u_{n-3}^{p+1}+\omega_{2}u_{n-2}^{p+1}
  +\omega_{1}u_{n-2}^{p+1}+\omega_{0}u_{n-2}^{p+1})
\end{multline}\\
The proposed algorithm is a five point scheme. In order to implement it, the numerical values of   $S^{2}=[S_{1}^{2},S_{2}^{2},S_{3}^{2},\cdots,S_{n-1}^{2}]^{T}$ and $S^{1}=[S_{1}^{1},S_{2}^{1},S_{3}^{1},\cdots,S_{n-1}^{1}]^{T}$ are needed. To calculate the values of $S^{2}$, it is required to find $S^{1}$. Solving Eq.\eqref{3.6e} and using the non polynomial quintic spline technique, value of $ S^{1}$ can be found as:\\
\begin{multline}\label{4.8e}
  (\alpha_{1}+\frac{\beta\alpha}{h^{4}})S_{i-2}^{1}+(\beta_{1}-4\frac{\beta\alpha}{h^{4}})S_{i-1}^{1}+(\gamma_{1}+6\frac{\beta\alpha}{h^{4}})S_{i}^{1}+(\beta_{1}-4\frac{\beta\alpha}{h^{4}})S_{i+1}^{1}+(\alpha_{1}+\frac{\beta\alpha}{h^{4}})S_{i+2}^{1} \\
  =J_{i},\ \ \ \ \ \ i=2,3,\cdots,n-2.
\end{multline}
where
\begin{multline*}
  J_{i}=(\alpha_{1}v_{i-2}+\beta_{1}v_{i-1}+\gamma_{1}v_{i}+\beta_{1}v_{i+1}+\alpha_{1}v_{i+2})+\beta(\alpha_{1}u_{i-2}^{p+1}+\beta_{1}u_{i-1}^{1}+
\gamma_{1}u_{i}^{1}+\beta_{1}u_{i+1}^{1}+\alpha_{1}u_{i+2}^{1})
\end{multline*}
The system \eqref{4.8e} consists of $(n-3)$ equations involving $S_{i}^{1},i=1,2,\cdots,n-1$. Hence, to get a unique solution to this system, two additional end equations can be obtained from simply supported boundary conditions in the following way
\begin{multline}\label{4.9e}
(\omega_{0}-2\frac{\beta\alpha}{h^{4}})S_{0}^{1}+(\omega_{1}+5\frac{\beta\alpha}{h^{4}})S_{1}^{1}+(\omega_{2}-4\frac{\beta\alpha}{h^{4}})S_{2}^{1}+(\omega_{3}+\frac{\beta\alpha}{h^{4}})S_{3}^{1}=(\omega_{0}v_{0}+\omega_{1}v_{1}+\omega_{2}v_{2}+\omega_{3}v_{3})\\
+\beta(\omega_{0}u_{0}^{1}+\omega_{1}u_{1}^{1}+\omega_{2}u_{2}^{1}+\omega_{3}u_{3}^{1})
\end{multline}
\begin{multline}\label{4.10e}
  (\omega_{3}+\frac{\beta\alpha}{h^{4}})S_{n-3}^{1}+(\omega_{2}-4\frac{\beta\alpha}{h^{4}})S_{n-2}^{1}+(\omega_{1}+5\frac{\beta\alpha}{h^{4}})S_{n-1}^{1}+(\omega_{0}-2\frac{\beta\alpha}{h^{4}})S_{n}^{1}
=(\omega_{3}v_{n-3}+\omega_{2}v_{n-2}\\+\omega_{1}v_{n-1}+\omega_{0}v_{n})+\beta(\omega_{3}u_{n-3}^{1}+\omega_{2}u_{n-2}^{1}+\omega_{1}u_{n-1}^{1}+\omega_{0}u_{n}^{1})
\end{multline}
Suppose $v=[v_{1},v_{2},\cdots,v_{n-1}]^{T}$, $\ u= [u_{1},u_{2},\cdots,u_{n-1}]^{T}$, $\overset{\sim}{v}=[v_{0},0,\cdots,0,v_{n}]^{T}$ and $\overset{\sim}{u}=[u_{0},0,\cdots,0,u_{n}]^{T}$ are column vectors with dimension $(n-1)$.
The system in \eqref{4.8e}--\eqref{4.10e} can be expressed as
\begin{equation*}
  AS^{1}=B(v+\beta u)+C(\overset{\sim}{v}+\beta \overset{\sim}{u})
\end{equation*}
where A,B and C are square matrices of order $(n-1)$, such that
\begin{equation*}
  A=\begin{pmatrix}
      \omega_{1}+5\frac{\beta\alpha}{h^{4}} & \omega_{2}-4\frac{\beta\alpha}{h^{4}} & \omega_{3}+\frac{\beta\alpha}{h^{4}} & 0 & 0 & 0 & \cdots & 0\\
    \beta_{1}-4\frac{\beta\alpha}{h^{4}} & \gamma_{1}+6\frac{\beta\alpha}{h^{4}} & \beta_{1}-4\frac{\beta\alpha}{h^{4}} & \alpha_{1}+\frac{\beta\alpha}{h^{4}} & 0 & 0 & \cdots & 0\\
    \alpha_{1}+\frac{\beta\alpha}{h^{4}} & \beta_{1}

    -4\frac{\beta\alpha}{h^{4}} & \gamma_{1}+6\frac{\beta\alpha}{h^{4}} & \beta_{1}-4\frac{\beta\alpha}{h^{4}} & \alpha_{1}+\frac{\beta\alpha}{h^{4}} & 0 & \cdots & 0 \\
     & \ddots &  &  & \ddots & & \ddots   \\
    0 & \cdots & 0 & \alpha_{1}+\frac{\beta\alpha}{h^{4}} & \beta_{1}-4\frac{\beta\alpha}{h^{4}} & \gamma_{1}+6\frac{\beta\alpha}{h^{4}} & 0 \beta_{1}-4\frac{\beta\alpha}{h^{4}} & \alpha_{1}+\frac{\beta\alpha}{h^{4}} \\
    0 & \cdots & 0 & & \alpha_{1}+\frac{\beta\alpha}{h^{4}} & \beta_{1}-4\frac{\beta\alpha}{h^{4}} & \gamma_{1}+6\frac{\beta\alpha}{h^{4}} & \beta_{1}-4\frac{\beta\alpha}{h^{4}} \\
    0 & \cdots & 0 & 0 & 0 & \omega_{3}+\frac{\beta\alpha}{h^{4}} & \omega_{2}-4\frac{\beta\alpha}{h^{4}} & \omega_{1}+5\frac{\beta\alpha}{h^{4}} \\
    \end{pmatrix}
\end{equation*}\\
\begin{equation*}
  B=\begin{pmatrix}
      \omega_{1} & \omega_{2} & \omega_{3} & 0 & 0 & 0 & \cdots & 0   \\
    \alpha_{1} & \beta_{1} & \gamma_{1} & \alpha_{1} & 0 & 0 & \cdots & 0\\
    \alpha_{1} & \beta_{1} & \gamma_{1} & \beta_{1} & \alpha_{1} & 0 & \cdots & 0 \\
      & \ddots &  &  & \ddots & & \ddots  \\
   0 & \cdots & 0 & \alpha_{1} & \beta_{1} & \gamma_{1} & \beta_{1} & \alpha_{1}  \\
   0 & \cdots & 0 & 0 & \alpha_{1} & \beta_{1} & \gamma_{1} & \beta_{1} \\
   0 & \cdots & 0 & 0 & 0 & \omega_{3} & \omega_{2} & \omega_{1} \\
    \end{pmatrix}
    \ \text{and} \
    C=\begin{pmatrix}
  \omega_{0} & 0 & 0 &  0 & 0 & 0 & 0 & \cdots & 0   \\
    1 & 0 & 0 & 0 & 0 & 0 & 0 & \cdots & 0 \\
    0 & 0 & 0 & 0 & 0 & 0 & 0 & \cdots & 0 \\
    & \ddots &  &  & \ddots & & \ddots  \\

   0 & \cdots & 0 & 0 & 0 & 0 & 0 & 0 & 0 \\
     0 & \cdots & 0 & 0 & 0 & 0 & 0 & 0 & 1 \\
    0 & \cdots & 0 & 0 & 0 & 0 & 0 & 0 & \omega_{0} \\
    \end{pmatrix}
\end{equation*}
\subsection{Calculation of Truncation Error}
The Eq.\eqref{4.4e} can be written in the following form
\begin{multline}\label{61e}
 h^{4}(\alpha_{1}S_{i-2}^{p+1}+\beta_{1}S_{i-1}^{p+1}+\gamma_{1}S_{i}^{p+1}+\beta_{1}S_{i+1}^{p+1}+\alpha_{1}S_{i+2}^{p+1})
 +\beta\alpha(S_{i-2}^{p+1}-4S_{i-1}^{p+1}+6S_{i}^{p+1}-4S_{i+1}^{p+1}\\
+S_{i+2}^{p+1})=h^{4}(1-b_{1})(\alpha_{1}S_{i-2}^{p}+\beta_{1}S_{i-1}^{p}+\gamma_{1}S_{i}^{p}+\beta_{1}S_{i+1}^{p}+\alpha_{1}S_{i+2}^{p})
+\sum_{j=1}^{p-1}h^{4}(b_{j}-b_{j+1})\\
+(\alpha_{1}S_{i-2}^{p-j}+\beta_{1}S_{i-1}^{p-j}+\gamma_{1}S_{i}^{p-j}+\beta_{1}S_{i}^{p-j}+\alpha_{1}S_{i+2}^{p-j})
  +h^{4}b_{p}(\alpha_{1}v_{i-2}+\beta_{1}v_{i-1}+\gamma_{1}v_{i}\\
  +\beta_{1}v_{i+1}+\alpha_{1}v_{i+2})+h^{4} \beta(\alpha_{1}u_{i-2}^{p+1}+\beta_{1}u_{i-1}^{p+1}+\gamma_{1}u_{i}^{p+1}
 +\beta_{1}u_{i+1}^{p+1}+\alpha_{1}u_{i+2}^{p+1})\\p=1,2,3,\cdots,K-1
\end{multline}
or
\begin{multline*}
h^{4}(\alpha_{1}S_{i-2}^{p+1}+\beta_{1}S_{i-1}^{p+1}+\gamma_{1}S_{i}^{p+1}+\beta_{1}S_{i+1}^{p+1}+\alpha_{1}S_{i+2}^{p+1})
 +\beta\alpha(S_{i-2}^{p+1}-4S_{i-1}^{p+1}+6S_{i}^{p+1}-4S_{i+1}^{p+1}\\
 +S_{i+2}^{p+1})=h^{4}(1-b_{1})(\alpha_{1}S_{i-2}^{p}+\beta_{1}S_{i-1}^{p}+\gamma_{1}S_{i}^{p}+\beta_{1}S_{i+1}^{p}+\alpha_{1}S_{i+2}^{p})
 +\sum_{j=1}^{p-1}h^{4}(b_{j}-b_{j+1})(\alpha_{1}S_{i-2}^{p-j}\\
 +\beta_{1}S_{i-1}^{p-j}+\gamma_{1}S_{i}^{p-j}+\beta_{1}S_{i}^{p-j}+\alpha_{1}S_{i+2}^{p-j})+h^{4}b_{p}(\alpha_{1}v_{i-2}+\beta_{1}v_{i-1}
+\gamma_{1}v_{i}+\beta_{1}v_{i+1}+\alpha_{1}v_{i+2})\\
+h^{4}\beta(\alpha_{1}u_{i-2}^{p+1}+\beta_{1}u_{i-1}^{p+1}+\gamma_{1}u_{i}^{p+1}+\beta_{1}u_{i+1}^{p+1}+\alpha_{1}u_{i+2}^{p+1}),
~~~p=1,2,3,\cdots,K-1
\end{multline*}
Expanding equation \eqref{4.4e} with Taylor series in terms of $S(x_{i},t_{p})$ and its spatial derivatives , the truncation error is obtained,as
\begin{multline}
  T_{i}=\big(\mu_{1}h^{4}+\mu_{2}h^{6}D_{x}^{2}+\mu_{3}h^{8}D_{x}^{4}+\cdots \big)S_{i}^{p+1}+ \beta\alpha\big(h^{4}D_{x}^{4}+\frac{1}{6}h^{6}D_{x}^{6}+\frac{1}{80}h^{8}D_{x}^{8}+\cdots\big)S_{i}^{p+1}\\
  - \big((1-b_{1})\mu_{1}h^{4}+\mu_{2}h^{6}D_{x}^{2}+\mu_{3}h^{8}D_{x}^{4}+\cdots\big)S_{i}^{p}
  - \big(\sum_{j=1}^{p-1}(b_{j}-b_{j+1})\mu_{1}h^{4}+\mu_{2}h^{6}D_{x}^{2}+\mu_{3}h^{8}D_{x}^{4}+\cdots \big)S_{i}^{p-j}\\
  - b_{p}\big(\mu_{1}h^{4}+\mu_{2}h^{6}D_{x}^{2}+\mu_{3}h^{8}D_{x}^{4}+\cdots \big)v_{i}
  -\beta\big(\mu_{1}h^{4}+\mu_{2}h^{6}D_{x}^{2}+\mu_{3}h^{8}D_{x}^{4}+\cdots \big)u_{i}^{p+1}\\
  \end{multline}
  Where\\
  $\mu_{1}=2\alpha_{1}+2\beta_{1}+\gamma_{1}$,
   $\mu_{2}=4\alpha_{1}+\beta_{1}$ and
  $\mu_{3}=\frac{4}{3}\alpha_{1}+\beta_{1}.$
 \begin{multline*}
  T_{i} =\big(\mu_{1}h^{4}+\mu_{2}h^{6}D_{x}^{2}+\mu_{3}h^{8}D_{x}^{4}+\cdots \big)S_{i}^{p+1}+\beta\alpha\big(h^{4}D_{x}^{4}+\frac{1}{6}h^{6}D_{x}^{6}+\frac{1}{80}h^{8}D_{x}^{8}+\cdots\big)S_{i}^{p+1}\\
  - \big((1-b_{1})\mu_{1}h^{4}+\mu_{2}h^{6}D_{x}^{2}+\mu_{3}h^{8}D_{x}^{4}+\cdots\big)S_{i}^{p}
  - \bigg(\sum_{j=1}^{p-1}(b_{j}-b_{j+1})\mu_{1}h^{4}+\mu_{2}h^{6}D_{x}^{2}+\mu_{3}h^{8}D_{x}^{4}+\cdots \bigg)S_{i}^{p-j}\\
  - b_{p}\big(\mu_{1}h^{4}+\mu_{2}h^{6}D_{x}^{2}+\mu_{3}h^{8}D_{x}^{4}+\cdots \big)S_{i}^{0}
  -\beta\bigg(\mu_{1}h^{4}+\mu_{2}h^{6}D_{x}^{2}+\mu_{3}h^{8}D_{x}^{4}+\cdots \bigg)(D_{t}^{2-\gamma}+\alpha D_{x}^{4})S_{i}^{p+1}
  \end{multline*}
  From above discussion and Theorem 2, It is concluded that the scheme is of $O(h^{4}+\Delta t^{4-\gamma})$

\section{Numerical Results}In this section, we consider three test problems to check the validity and efficiency of the proposed numerical scheme. The approximate results are compared with quintic spline collocation method (QnSM) used in [19]. All the computations are executed in $Mathematica\  9.0$. The accuracy of presented technique is tested by error norms $L_{\infty}$, $L_{2}$ and order of convergence ($\chi$), which are calculated as
\begin{equation*}
  L_{\infty}= max|y_{i}-Y_{i}|~,\ \ \ \  \ \
  L_{2}=\sqrt{\frac{\sum_{i=0}^{n}|y_{i}-Y_{i}|^{2}}{\sum_{i=0}^{n}|y_{i}|^{2}}}
 \end{equation*}
 \begin{equation*}
    \chi  =\frac{1}{\log(2)}\bigg[\log\frac{L_{\infty}(n)}{L_{\infty}(2n)}\bigg]\\
 \end{equation*}
 where $y_{i},Y_{i}$ represent the exact and approximate solution at $i^{th}$ knot respectively.
\begin{problem}\label{prb1}
  Consider the fourth order time-fractional PDE \cite{tariq2017}
\begin{equation*}
\frac{\partial^{\gamma}y}{\partial t^{\gamma}}+\alpha \frac{\partial^{4}y}{\partial x^{4}}= u(x,t), \ \ \ \ \ 0\leq x\leq 1,\ \ \ \ 0<t\leq T
\end{equation*}
with initial condition\\
\begin{equation*}
  y(x,0)=\sin(\pi x)
\end{equation*}
and the boundary conditions\\
\begin{align*}
  y(0,t)&=y(1,t)=0 \\
  y_{xx}(0,t) & =y_{xx}(1,t)=0
\end{align*}
\end{problem}
The exact solution is  $y(x,t)=\sin(\pi x)e^{t}.$
The computational error norms $L_{\infty}$ and $L_{2}$  corresponding to different values of $\gamma$ are listed in Table \ref{t11} when $\alpha=0.01$ and $n=100$. It is obvious that our proposed computational approach produces more accurate results with $\Delta t=0.01$ as compared to QnSM used in \cite{tariq2017} with $\Delta t=0.000001$. A comparison of $L_{\infty}$, $L_{2}$ and order of convergence $\chi$ with QnSM \cite{tariq2017} at $t=1$ corresponding to $\gamma=0.5$ and $\Delta t=h$ is reported in Table \ref{t12}. It is observed that the order of convergence in numerical results exhibits a good agreement with the theoretical estimation. In Figure \ref{fig:f1}, three dimensional visuals of exact and approximate solutions are displayed for $n=100$, $\Delta t=0.01$. The absolute numerical error at $t=1$ corresponding to $n=100$, $\Delta t=0.01$ and $\gamma=0.5$ is portrayed in Figure \ref{fig:f1e}.
\begin{table}[h!]
  \centering
  \caption{Comparison of absolute error for Problem \ref{prb1} when $n=100$}\label{t11}
  \begin{tabular}{cccccc}
  \hline\hline
   & \multicolumn{2}{c}{Method in \cite{tariq2017}} && \multicolumn{2}{c}{Proposed method} \\
   & \multicolumn{2}{c}{$\Delta t=0.000001$, $t=0.0001$} && \multicolumn{2}{c}{$\Delta t=0.01$, $t=1$} \\
  \cline{2-3}\cline{5-6}
  $\gamma$& $L_{\infty}$ & $L_{2}$ && $L_{\infty}$ & $L_{2}$\\
  \hline
  0.25 & $1.2346\times10^{-5}$ & $8.7299\times10^{-7}$ && $6.4023\times10^{-8}$ & $1.6489\times10^{-8}$ \\
  [0.2cm]
  0.50& $1.7841\times10^{-6}$ & $1.2616\times10^{-7}$ && $5.6896\times10^{-8}$ & $1.7455\times10^{-8}$ \\
  [0.2cm]
  0.75& $5.1222\times10^{-7}$ & $6.6219\times10^{-8}$ && $ 4.0157\times10^{-8}$ & $1.3770\times10^{-9}$\\
  [0.2cm]
    1.00 &$ 9.2130\times10^{-7}$ &$6.5146\times10^{-8}$&& $9.0571\times10^{-9} $& $3.3451\times10^{-9}$\\
  \hline\hline
\end{tabular}
\end{table}

\begin{table}
  \centering
  \caption{Computational error norms and order of convergence for Problem \ref{prb1} when $\Delta t=h $}\label{t12}
  \resizebox{\textwidth}{!}{
  \begin{tabular}{cccccccccc}
  \hline\hline
  & \multicolumn{4}{c}{Method in \cite{tariq2017}} && \multicolumn{4}{c}{Proposed method} \\
   & \multicolumn{4}{c}{$\Delta t=0.000001$, $t=0.0001$} && \multicolumn{4}{c}{$\Delta t=0.01$, $t=1$} \\
  \cline{2-5}\cline{7-10}
   $n$ &  $L_{\infty}$          & $\chi$ & $L_{2}$ & $\chi$ && $L_{\infty}$  & $\chi$ & $L_{2}$ & $\chi$ \\
   \hline
 10& -- & -- & -- &$ -$ && $5.3290\times10^{-4}$ & -- & $1.5741\times10^{-4}$  &-- \\
  20&$1.3279\times10^{-2}$ & --& $2.0995\times10^{-3}$ & -- && $2.9936\times10^{-5}$ &$4.1539$ & $8.2199\times10^{-4}$  &$4.2592$ \\
  40&$4.4730\times10^{-3}$ & $1.5700$ & $8.0009\times10^{-4}$ & $1.3918$ && $1.5100\times10^{-6}$ & $4.3092$ & $4.5177\times10^{-8}$ & $4.1873$ \\
  80&$1.5282\times10^{-3}$ & $1.5496$ &$3.2081\times10^{-5}$ & $1.3183$  && $7.5135\times10^{-8}$ & $4.3289$ & $2.7099\times10^{-8}$ & $4.0573$ \\
160& $5.2715\times10^{-4}$ & $1.5357$ &$9.9469\times10^{-5}$ & $1.6892$   && $4.2651\times10^{-9}$ & $4.1388$ & $1.1.6778\times10^{-9}$ & $4.0136$ \\
  \hline\hline
\end{tabular}
}
\end{table}
\begin{figure}[h!]
\minipage{0.45\textwidth}
\centering
  (a) Exact solution
\endminipage\hfill
\minipage{0.45\textwidth}
\centering
  (b) Approximate solution
\endminipage\hfill
\caption{Exact and approximate solution for Problem \ref{prb1} when $n=100,~\Delta t=0.01$ and $\gamma=0.25$}\label{fig:f1}
\end{figure}
\begin{figure}[h!]
\centering
\caption{Absolute error for Problem \ref{prb1} using $n=100$,~$\Delta t=0.01$ and $\gamma=0.25$}\label{fig:f1e}
\end{figure}
\begin{problem}\label{prb2}
  Consider the following fourth order time-fractional PDE \cite{tariq2017}
\begin{equation*}
\frac{\partial^{\gamma}y}{\partial t^{\gamma}}+0.05 \frac{\partial^{4}y}{\partial x^{4}}= u(x,t), \ \ \ \ \ 0\leq x\leq 1,\ \ \ \ 0<t\leq T
\end{equation*}
with initial condition
\begin{equation*}
  y(x,0)=0
\end{equation*}
and the boundary conditions
\begin{align*}
  y(0,t)&=y(1,t)=0 \\
  y_{xx}(0,t)&= y_{xx}(1,t)=0
\end{align*}
\end{problem}
The close form solution is $y(x,t)=t\sin(\pi x).$
 The computational error norms $L_{\infty}$ and $L_{2}$ for $n=100$ and different choices of $\gamma$ are presented in Table \ref{t21}. It can be observed that our presented approach yields more accurate results with $\Delta t=0.01$ as compared to QnSM employed in \cite{tariq2017} with $\Delta t=0.000001$. Table \ref{t22} presents the error norms $L_{\infty}$, $L_{2}$ and the corresponding order of convergence at $t=1$ for $\Delta t=h and \gamma=0.5$. Figure \ref{fig:f2} shows three dimensional plots of exact and approximate solutions for $n=100$, $\Delta t=0.01$ and $\gamma=0.5$. The absolute numerical error at $t=1$ corresponding to $n=100$, $\Delta t=0.01$ and $\gamma=0.25$ is portrayed in Figure \ref{fig:f2e}.
\begin{table}
  \centering
  \caption{Comparison of absolute errors for problem \ref{prb2} when n=100}\label{t21}
  \begin{tabular}{cccccc}
  \hline\hline
  & \multicolumn{2}{c}{Method in \cite{tariq2017}} && \multicolumn{2}{c}{Proposed method} \\
   & \multicolumn{2}{c}{$\Delta t=0.000001$, $t=0.0001$} && \multicolumn{2}{c}{$\Delta t=0.01$, $t=1$} \\
  \cline{2-3}\cline{5-6}
  & $L_{\infty}$ & $L_{2}$ && $L_{\infty}$ & $L_{2}$\\
  \hline
  0.25 & $9.6400\times10^{-7}$ & $6.8165\times10^{-8}$ && $9.5143\times10^{-9}$ & $9.2399\times10^{-9}$ \\
  0.50& $9.9135\times10^{-7}$ & $7.0099\times10^{-8}$ && $8.9541\times10^{-9}$ & $8.4093\times10^{-9}$ \\
  0.75& $9.9997\times10^{-7}$ & $7.0709\times10^{-8}$ &&$ 9.7525\times10^{-9}$ & $8.2487\times10^{-9}$\\
    1.00 &$ 1.0000\times10^{-6}$ &$7.0711\times10^{-8} $&& $9.8911\times10^{-9} $& $9.0927\times10^{-9}$\\
  \hline\hline
\end{tabular}
\end{table}
\begin{table}
  \centering
  \caption{Computational error norms and order of convergence for problem \ref{prb2} when $\gamma=0.5$}\label{t22}
  \resizebox{\textwidth}{!}{
  \begin{tabular}{cccccccccc}
  \hline\hline
  & \multicolumn{4}{c}{Method in \cite{tariq2017}} && \multicolumn{4}{c}{Proposed method} \\
   & \multicolumn{4}{c}{$\Delta t=0.000001$, $t=0.0001$} && \multicolumn{4}{c}{$\Delta t=0.01$, $t=1$} \\
  \cline{2-5}\cline{7-10}
   $n$ &  $L_{\infty}$          & $\chi$ & $L_{2}$ & $\chi$ && $L_{\infty}$  & $\chi$ & $L_{2}$ & $\chi$ \\
 \hline
  $20$ & $1.0861\times10^{-2}$ & -- & $1.7173\times10^{-3}$ & -- && $9.5789\times10^{-4}$ & -- & $9.4891\times10^{-4}$ &  \\
  $40$ & $3.9014\times10^{-3}$ & $1.4771$ & $5.3619\times10^{-4}$ & $1.6793$ &&$4.9968\times10^{-5}$ & $4.2606$  & $4.8987\times10^{-5}$ & $4.2763$ \\
  $80$ & $1.3892\times10^{-3}$ & $1.4897$ & $2.0983\times10^{-4}$ & $1.3535$ && $2.4519\times10^{-6}$ & $4.3490$ & $2.4361\times10^{-6}$ & $4.3291$\\
  $160$& $4.9276\times10^{-4}$ & $1.4953$ & $6.7546\times10^{-5}$ & $1.6353$ && $1.2826\times10^{-7}$ & $4.2567$ & $1.2943\times10^{-7}$ & $4.2578$\\
  \hline\hline
\end{tabular}
}
\end{table}
\begin{figure}[h!]
\minipage{0.45\textwidth}
\centering
  (a) Exact solution
\endminipage\hfill
\minipage{0.45\textwidth}
\centering
  (b) Approximate solution
\endminipage\hfill
\caption{Exact and approximate solution for Problem \ref{prb2} when $n=100$,~$\Delta t=0.01$ and $\gamma=0.5$ }\label{fig:f2}
\end{figure}
\begin{figure}[h!]
\centering
\caption{Absolute error for Problem \ref{prb2} when $n=100$,~ $\Delta t=0.01$ and $\gamma=0.5$}\label{fig:f2e}
\end{figure}
\begin{problem}\label{prb3}
Consider the fourth order time--fractional PDE \cite{tariq2017}\\
\begin{equation*}
\frac{\partial^{\gamma}y}{\partial t^{\gamma}}+0.05 \frac{\partial^{4}y}{\partial x^{4}}= u(x,t), \ \ \ \ \ 0\leq x\leq 1,\ \ \ \ 0<t\leq T
\end{equation*}
with initial condition\\
\begin{equation*}
  y(x,0)=\sin\pi x,
\end{equation*}
and the boundary conditions\\
\begin{align*}
  y(0,t)&=y(1,t)=0 \\
  y_{xx}(0,t) & =y_{xx}(1,t)=0,
\end{align*}
\end{problem}
The analytical exact solution is $y(x,t)=(t+1)\sin\pi x$. Table \ref{t31} shows a comparison of computational error norms with QnSM \cite{tariq2017} corresponding to different selections of $\gamma$. It is found that our computational outcomes are better than QnSM \cite{tariq2017}. In Figure \ref{fig:f3}, three dimensional visuals of exact and approximate solutions are displayed for $n=40$, $\gamma=0.5$ and $\Delta t=0.01$. The absolute numerical error at $t=1$ corresponding to $n=40$, $\Delta t=0.01$ and $\gamma=0.5$ is portrayed in Figure \ref{fig:f3e}. It is obvious that approximate solution is highly consistent with the analytical exact solution, which proves the effectiveness of proposed scheme.
\begin{table}\label{t31}
  \centering
  \caption{Comparison of absolute errors for problem \ref{prb3} when $n=40$}
   \begin{tabular}{cccccc}
  \hline\hline
  & \multicolumn{2}{c}{Method in \cite{tariq2017}} && \multicolumn{2}{c}{Proposed method} \\
   & \multicolumn{2}{c}{$\Delta t=0.000001$, $t=0.0001$} && \multicolumn{2}{c}{$\Delta t=0.01$, $t=1$} \\
  \cline{2-3}\cline{5-6}
  & $L_{\infty}$ & $L_{2}$ && $L_{\infty}$ & $L_{2}$\\
  \hline
  0.25 & -- & -- && $8.7834\times10^{-8}$ & $7.1567\times10^{-8}$ \\
  0.50 & $2.1423\times10^{-4}$ & $2.3952\times10^{-5}$ && $3.5666\times10^{-8}$ & $1.5945\times10^{-8}$ \\
  0.75 & -- & -- &&$ 5.5943\times10^{-8}$ & $3.4214\times10^{-8}$\\
    1.00 &$ 2.6524\times10^{-5}$ &$2.9654\times10^{-6} $&& $1.4199\times10^{-8} $& $7.1879\times10^{-9}$\\
  \hline\hline
\end{tabular}
\end{table}
\begin{figure}[h!]
\minipage{0.45\textwidth}
\centering
  (a) Exact solution
\endminipage\hfill
\minipage{0.45\textwidth}
\centering
  (b) Approximate solution
\endminipage\hfill
\caption{Exact and approximate solution for Problem \ref{prb3} when $\gamma=0.5,~n=40$ and $\Delta t=0.01$ }\label{fig:f3}
\end{figure}
\begin{figure}[h!]
\centering
\caption{Absolute error for Problem \ref{prb3} when $n=40,~\Delta t=0.01$ and $\gamma=0.5$}\label{fig:f3e}
\end{figure}
\section{Conclusion}
In this work, non polynomial quintic spline collocation method has been employed for approximate solution of fourth order time--fractional partial differential equations. The backward Euler's method has been used for temporal discretization, whereas, non polynomial quintic spline function composed of a trigonometric part and a polynomial part has been employed to interpolate the unknown function in spatial direction. The proposed numerical algorithm is proved to be convergent and unconditionally stable. The numerical outcomes are found to be more accurate as compared to QnSM \cite{tariq2017}.

\bibliographystyle{unsrt}
\bibliography{bibfile_amin}
\end{document}